\documentclass[a4paper,11pt,leqno]{article}
\usepackage{amsmath,amssymb,amsfonts,amsthm,graphicx,fancyhdr,epic}
\usepackage[latin1]{inputenc}
\usepackage[T1]{fontenc}
\usepackage{rotating,datetime,mathrsfs,hhline,multirow,float}

\newdateformat{mydate}{\THEYEAR/\THEMONTH/\THEDAY}

\setlength{\voffset}{0in}
\setlength{\topmargin}{0cm}
\setlength{\headheight}{13.6pt}
\setlength{\headsep}{0cm}
\setlength{\footskip}{1cm}

\setlength{\hoffset}{0in}
\setlength{\oddsidemargin}{0cm}
\setlength{\evensidemargin}{0cm}
\setlength{\textwidth}{16cm}

\newtheorem{teo}{Theorem}[section]
\newtheorem{lem}[teo]{Lemma}
\newtheorem{cor}[teo]{Corollary}
\newtheorem{defi}[teo]{Definition}
\newtheorem{ques}[teo]{Question}
\newtheorem{conj}[teo]{Conjecture}

\newtheoremstyle{drem}
     {3pt}
     {3pt}
     {\rmfamily}
     {}
     {\bf}
     {:}
     {.5em}
     {}

\theoremstyle{drem}
\newtheorem{rem}[teo]{Remark}

\flushbottom

\newcommand{\vstr}[0]{{\vrule width 0in height 3ex depth 0in}}

\newcommand{\eg}[0]{\emph{e.g.} }
\newcommand{\ie}[0]{\emph{i.e.} }

\newcommand{\pgen}[1]{\langle #1 \rangle}

\newcommand{\ssl}[1]{\underline{#1}}
\newcommand{\srl}[1]{\overline{#1}}

\newcommand{\eps}[0]{\epsilon}

\newcommand{\rr}[0]{\ensuremath{\mathbb{R}}}
\newcommand{\zz}[0]{\ensuremath{\mathbb{Z}}}
\newcommand{\nn}[0]{\ensuremath{\mathbb{N}}}

\newcommand{\jo}[1]{\ensuremath{\mathcal{#1}}}

\DeclareSymbolFont{bbold}{U}{bbold}{m}{n}
\DeclareSymbolFontAlphabet{\mathbbold}{bbold}
\newcommand{\un}[0]{\mathbbold{1}}

\title{The Liouville property and Hilbertian compression}
\author{Antoine Gournay}
\date{\mydate\today}

\begin{document}

\maketitle

\begin{abstract}
Lower bound on the equivariant Hilbertian compression exponent $\alpha$ are obtained using random walks. More precisely, if the probability of return of the simple random walk is $\succeq \textrm{exp}(-n^\gamma)$ in a Cayley graph then $\alpha \geq (1-\gamma)/(1+\gamma)$. This motivates the study of further relations between return probability, speed, entropy and volume growth. For example, if $|B_n| \preceq e^{n^\nu}$ then the speed exponent is $\leq 1/(2-\nu)$. 

Under a strong assumption on the off-diagonal decay of the heat kernel, the lower bound on compression improves to $\alpha \geq 1-\gamma$. Using a result from Naor \& Peres \cite{NPspeed} on compression and the speed of random walks, this yields very promising bounds on speed and implies the Liouville property if $\gamma <1/2$.
\end{abstract}

\section{Introduction}

Throughout the text, $G$ will be a finitely generated discrete group and it will be studied using its Cayley graph. The finite symmetric generating set $S$ chosen to produce the Cayley graph will not be explicitly mentioned unless it is of importance; finiteness and symmetry will also always be assumed. $P$ is the distribution of a lazy random walk. More precisely, it is obtained from a simple random walk distribution $P' = \un_S/|S|$ by $P = \tfrac{1}{2} (\delta_e + P')$. $P^{(n)}$ is the $n^\text{th}$-step distribution of the lazy random walk, \ie the $n^\text{th}$-convolution of $P$ with itself.

For further definitions, the reader should consult \S{}\ref{sdefi}.
\begin{teo}\label{tlrwcomp-t}
If $P^{(n)}(e) \geq L e^{-K n^{\gamma}}$ where $L,K >0$
then the equivariant compression exponent of $G$, $\alpha(G)$, satisfies $\alpha(G) \geq (1- \gamma)/(1+\gamma)$. 
\end{teo}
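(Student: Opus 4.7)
The plan is to prove the bound by constructing an explicit equivariant 1-cocycle $b\colon G\to \mathcal{H}$ from the random walk itself. By Schoenberg's theorem, this reduces to exhibiting a conditionally negative-definite function $\psi\colon G\to \mathbb{R}_{\geq 0}$ with $\psi(s)<\infty$ for $s\in S$ and $\psi(g)\succeq |g|^{2(1-\gamma)/(1+\gamma)}$.

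The building blocks come from the random walk distributions themselves. For each $n\geq 1$, set $\xi_n := P^{(n)}/\|P^{(n)}\|_2\in \ell^2(G)$ (a unit vector) and consider the matrix coefficient
\[
\phi_n(g) := \langle \lambda(g)\xi_n,\xi_n\rangle = P^{(2n)}(g)/P^{(2n)}(e),
\]
a positive-definite function for the left regular representation $\lambda$. The coboundary $b_n(g) := \lambda(g)\xi_n - \xi_n$ satisfies $\|b_n(g)\|^2 = 2(1-\phi_n(g))$, and a weighted direct sum $b(g) := \bigoplus_n c_n\, b_n(g)$ produces a candidate cocycle with $\psi(g) = \sum_n c_n^2 \|b_n(g)\|^2$.

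For the Lipschitz bound, the $P$-weighted identity
\[
\sum_g P(g)\,\|b_n(g)\|^2 = 2\Bigl(1 - \frac{P^{(2n+1)}(e)}{P^{(2n)}(e)}\Bigr)
\]
combined with the hypothesis $-\log P^{(m)}(e)\leq Km^\gamma$ gives, by summation by parts, that these increments are of order $n^{\gamma-1}$ on average; thus $\sum_n c_n^2\|b_n(s)\|^2<\infty$ whenever $c_n^2$ decays faster than $n^{-\gamma}$, so one takes $c_n=n^{-\beta}$ with $\beta$ just above $\gamma/2$. For the compression lower bound, one decomposes the sum by the scale of $n$ relative to $|g|$: trivially $\phi_n(g)=0$ for $2n<|g|$ (walks of length $2n$ cannot reach $g$), while in the intermediate regime the cocycle bound $1-\phi_n(g)\leq |g|^2\max_s(1-\phi_n(s))$ combined with a complementary estimate extracted from the return-probability hypothesis controls $1-\phi_n(g)$ from below in an appropriate averaged sense. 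Summing and optimizing over $\beta$ then yields $\psi(g)\succeq |g|^{2\alpha}$ with $\alpha$ arbitrarily close to $(1-\gamma)/(1+\gamma)$.

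The main obstacle is establishing the lower bound in the intermediate range $|g|/2\leq n\lesssim |g|^{1/(1-\gamma)}$: here the walk can in principle reach $g$, yet has not yet equilibrated, and without Gaussian off-diagonal heat-kernel bounds one must extract a quantitative lower bound on $1-\phi_n(g)$ from the return probability alone. The precise exponent $(1-\gamma)/(1+\gamma)$ then emerges as the balance between the weight decay required for Lipschitzness and the accumulated contribution across these scales.
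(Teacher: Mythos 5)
Your construction is, up to normalization, exactly the one the paper uses: your $\xi_n$ are the heat-kernel vectors $w_n=P^{(k_n)}$ rescaled, the identity $\|b_n(g)\|^2=2\bigl(1-P^{(2n)}(g)/P^{(2n)}(e)\bigr)$ is Lemma \ref{tnumer-l}, the $P$-weighted Lipschitz identity is Lemma \ref{tdenom-l}, and the claim that the increments $1-P^{(2n+1)}(e)/P^{(2n)}(e)$ are of order $n^{\gamma-1}$ ``on average'' is the pigeonhole argument of Lemma \ref{tdenomborn-l} (this is why one must take $k_n\in[n,2n]$ rather than $k_n=n$). So the skeleton and the choice of weights are right. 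The problem is that the step you yourself flag as ``the main obstacle'' is precisely the step that separates the claimed exponent $(1-\gamma)/(1+\gamma)$ from Tessera's $(1-\gamma)/2$, and your proposal does not close it. The trivial vanishing $\phi_n(g)=0$ for $n<|g|/2$ only retains the terms $n<|g|/2$, and $\sum_{n<|g|/2}n^{-\gamma-\eps}\asymp|g|^{1-\gamma-\eps}$ yields $\|b(g)\|_2\succeq|g|^{(1-\gamma-\eps)/2}$, i.e.\ only $\alpha\geq(1-\gamma)/2$. The substitute you propose for the intermediate range, $1-\phi_n(g)\leq|g|^2\max_{s}\bigl(1-\phi_n(s)\bigr)$, is an \emph{upper} bound on $1-\phi_n(g)$ and cannot, however it is combined with the return-probability hypothesis, produce the \emph{lower} bound $1-\phi_n(g)\geq\tfrac{1}{2}$ that the argument requires.

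The missing ingredient is not a conjectural Gaussian estimate but the unconditional Carne--Varopoulos bound \eqref{eqcarvar}, $P^{(2n)}(g)\leq Ne^{-M|g|^2/2n}$, valid in every Cayley graph. Dividing by the hypothesis $P^{(2n)}(e)\geq Le^{-K(2n)^{\gamma}}$ gives $\phi_n(g)\leq(N/L)\,e^{K(2n)^{\gamma}-M|g|^2/2n}$, which is $\leq\tfrac{1}{2}$ as soon as $n\leq c\,|g|^{2/(1+\gamma)}$ for a suitable $c>0$; summing $n^{-\gamma-\eps}$ over that range gives $\|b(g)\|_2^2\succeq|g|^{2(1-\gamma-\eps)/(1+\gamma)}$ and hence the theorem. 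Note also that the intermediate range you write down, $n\lesssim|g|^{1/(1-\gamma)}$, carries the wrong exponent: it must be $2/(1+\gamma)$. If one could control $1-\phi_n(g)$ from below for all $n\lesssim|g|^{1/(1-\gamma)}$, the same summation would give $\|b(g)\|_2^2\succeq|g|^{(1-\gamma-\eps)/(1-\gamma)}$, i.e.\ $\alpha\geq\tfrac{1}{2}$ for every $\gamma<1$ --- a conclusion different from (and much stronger than) the one you claim to derive. This mismatch shows that the ``balance'' invoked in your final sentence has not actually been computed; once the Carne--Varopoulos input is inserted, the balance is exactly $n^{1+\gamma}\asymp|g|^2$ and the exponent $(1-\gamma)/(1+\gamma)$ falls out.
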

This improves a lower bound from Tessera \cite[Proposition 15]{Tessera}: $\alpha(G) \geq (1- \gamma)/2$. The proof of Theorem \ref{tlrwcomp-t} is contained in \S{}\ref{slrw}. Recall that return probability are stable under quasi-isometries between Cayley graphs (see Pittet \& Saloff-Coste \cite[Theorem 1.2]{PSCstab}). There are many possible behaviours for $\gamma$, see Pittet \& Saloff-Coste \cite[Theorem 1.1]{PSCexp}. 

The speed [or drift] of a random walk is defined as $\mathbb{E}|P^{(n)}| =  \int |g| \mathsf{d} P^{(n)}(g)$ where $|g|$ is the word length of $g$ (\ie the graph distance in the Cayley graph between $g$ and the identity element). The speed [or drift] exponent is $\beta = \sup \{ c \in [0,1] \mid$ there exists $K >0$ such that $\mathbb{E}|P^{(n)}| \geq K n^c\}$. Surprisingly, there is little known on how much $\beta$ depends on $S$. 

Naor \& Peres showed in \cite[Theorem 1.1]{NPspeed} that $\alpha(G) \leq 1/2\beta$. Since the map $n \mapsto \mathbb{E}|P^{(n)}|$ is sub-additive, the sequence $\mathbb{E}|P^{(n)}|/n$ always as a limit. Compression is a natural way to show that $\beta$ is bounded away from $1$ (for any generating set) and hence, that the afore-mentioned limit is $0$. This is interesting because a group has the Liouville property if and only if $\mathbb{E}|P^{(n)}|$ is $\mathsf{o}(n)$.

However, the above result on compression only yields $\beta \leq (1+\gamma)/2(1-\gamma)$ which is non-trivial only if $\gamma < \tfrac{1}{3}$.
Let $B_n$ be the ball of radius $n$, \ie $B_n = \{ g \in G \mid |g| \leq n\}$. Recall that, if there are $K,L>0$ so that
\stepcounter{teo}
\[ \tag{\text{\theteo}} \label{eqvarop}
\forall n, |B_n| \geq K e^{Ln^v} \text{ then } \forall n, P^{(n)}(e) \leq K' e^{L n^c} \text{ with } c \leq \frac{v}{2+v}, 
\]
for some $K',L'>0$ (\eg see \cite[(14.5) Corollary]{Woess}). Hence the bound on speed is not interesting from the point of view of the Liouville property: by \eqref{eqvarop}, $\gamma < 1/3$ implies the group is of subexponential growth and so automatically Liouville. See \S{}\ref{sint} for more details.
However, this bound motivates further investigations on possible relations between the various quantities in groups of intermediate growth. 

Recall the entropy is defined by $H(P^{(n)}) := - \sum_{g \in G} P^{(n)}(g) \ln P^{(n)}(g)$.
\begin{teo}\label{tint-t} Assume $G$ is so that $|B_n| \leq Le^{Kn^{\nu}}$ for some $K,L>0$ and $\nu \in ]0,1]$.
\begin{enumerate}\setlength{\itemsep}{0ex} \renewcommand{\labelenumi}{ {\bf (\alph{enumi})}}
\item Then $\mathbb{E}|P^{(n)}| \leq K' n^{1/(2-\nu)}$ (hence $\beta \leq 1/(2-\nu)$)  and $H(P^{(n)}) \leq L'' + K'' n^{\nu/(2-\nu)}$ for some $K',K'',L''>0$.
\item $\alpha(G) \geq 1-\nu$.
\item If $H(P^{(n)}) \geq K' + L'n^h$ for some $K',L'>0$, then $\beta \nu \geq h$ and $ 2 h \alpha \leq \nu$.
\end{enumerate}
\end{teo}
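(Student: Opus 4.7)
The plan for (a) is to exploit the Varopoulos–Carne inequality, which on any Cayley graph gives $P^{(n)}(e,g) \leq 2\exp(-|g|^2/(2n))$. Combined with the volume hypothesis $|B_r| \leq Le^{Kr^\nu}$, the tail $\sum_{|g|\geq r} P^{(n)}(g)$ decays super-exponentially once $r$ exceeds the crossover radius $R_n := (cn)^{1/(2-\nu)}$ at which $Kr^\nu$ and $r^2/(2n)$ become comparable. Summing $\mathbb{E}|P^{(n)}| = \sum_{r \geq 1} P^{(n)}(\{|g|\geq r\})$ and splitting at $r = R_n$ yields $\mathbb{E}|P^{(n)}| \leq R_n + O(\sqrt n) = O(n^{1/(2-\nu)})$. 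For the entropy I would use the Gibbs inequality $H(P^{(n)}) \leq -\sum_g P^{(n)}(g)\log Q(g)$ applied to the reference measure $Q(g) = Z^{-1}\exp(-a|g|^\nu)$, choosing $a > K$ so that $Z$ is finite by the volume hypothesis. This gives $H(P^{(n)}) \leq \log Z + a\sum_g P^{(n)}(g)|g|^\nu$, and Jensen (valid since $\nu \in ]0,1]$) bounds $\sum_g P^{(n)}(g)|g|^\nu \leq (\mathbb{E}|P^{(n)}|)^\nu$, so the speed bound feeds directly into $H(P^{(n)}) \leq L'' + K'' n^{\nu/(2-\nu)}$.

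For (b), I would feed the speed estimate back into Theorem \ref{tlrwcomp-t}. Markov's inequality puts at least half the mass of $P^{(n)}$ inside $B_R$ for $R = 2\mathbb{E}|P^{(n)}| \leq Cn^{1/(2-\nu)}$, and Cauchy--Schwarz then yields
\[
P^{(2n)}(e) = \sum_g P^{(n)}(g)^2 \geq \frac{\bigl(\sum_{|g|\leq R} P^{(n)}(g)\bigr)^2}{|B_R|} \geq \frac{1}{4|B_R|} \geq L'\exp\bigl(-K' n^{\nu/(2-\nu)}\bigr).
\]
Theorem \ref{tlrwcomp-t} then applies with $\gamma = \nu/(2-\nu)$, and the arithmetic identity $(1-\gamma)/(1+\gamma) = 1-\nu$ gives $\alpha(G) \geq 1-\nu$.

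For (c), the key point is that the Gibbs entropy bound $H(P^{(n)}) \leq \log Z + a(\mathbb{E}|P^{(n)}|)^\nu$ established during the proof of (a) relates entropy directly to the mean displacement, without invoking (a)'s upper bound on $\mathbb{E}|P^{(n)}|$. Combining it with the hypothesis $H(P^{(n)}) \geq K' + L'n^h$ yields $(\mathbb{E}|P^{(n)}|)^\nu \gtrsim n^h$, hence $\mathbb{E}|P^{(n)}| \gtrsim n^{h/\nu}$ and so $\beta \geq h/\nu$, which is exactly $\beta\nu \geq h$. Combining with the Naor--Peres inequality $\alpha(G) \leq 1/(2\beta)$ quoted in the introduction then produces $\alpha \leq \nu/(2h)$, that is, $2h\alpha \leq \nu$.

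The main obstacle is choosing the correct device to turn the upper bound on volume into an entropy bound expressed in terms of the first moment $\mathbb{E}|P^{(n)}|$ rather than only in terms of $n$. The trivial bound $H(P^{(n)}) \leq \log|B_n|$ (which uses only $\supp P^{(n)} \subseteq B_n$) is far too crude for (c), and the Varopoulos--Carne splitting at $R_n = n^{1/(2-\nu)}$ produces only the bound claimed in (a). Choosing the reference density $Q(g) = e^{-a|g|^\nu}/Z$ with decay matching the volume growth is precisely what allows Gibbs plus Jensen to convert the volume hypothesis into a direct entropy--speed relation, which is the ingredient needed to close the loop via Naor--Peres.
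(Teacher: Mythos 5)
Your proposal is correct, and it reaches the speed bound in (a) by a genuinely different route from the paper. The paper never integrates the Varopoulos--Carne tail directly: it sandwiches the entropy, using \eqref{AVE} ($H(P^{(n)}) \geq \ln N + \tfrac{M}{n}(\mathbb{E}|P^{(n)}|)^2$, itself a consequence of \eqref{eqcarvar} plus convexity) from below and the Gibbs-type estimate of Lemma \ref{tvitinf-l} ($H(P^{(n)}) \leq L + 4 f_V(\mathbb{E}|P^{(n)}|)$) from above, then solves the resulting self-improving inequality $(\mathbb{E}|P^{(n)}|)^2 \leq n\bigl(\tilde L + C(\mathbb{E}|P^{(n)}|)^{\nu}\bigr)$ in Corollary \ref{tresum-c}. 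Your layer-cake splitting of $\sum_r P^{(n)}(\{|g|\geq r\})$ at the crossover radius $R_n=(cn)^{1/(2-\nu)}$ is more elementary and self-contained (and the tail beyond $R_n$ is in fact $\mathsf{o}(1)$, so your generous $O(\sqrt n)$ is harmless since $\tfrac12 \leq \tfrac{1}{2-\nu}$); the paper's detour through entropy buys the extension of Corollary \ref{tresum-c} to measures with finite second moment, where a pointwise Carne--Varopoulos bound is not directly available. Your entropy estimate is the paper's Lemma \ref{tvitinf-l} with the reference measure $Z^{-1}e^{-a|g|^{\nu}}$ in place of the sphere-uniform $\phi(|g|)/a_{|g|}$ --- exactly the variant the paper attributes to Erschler \& Karlsson after that lemma --- and it conveniently dispenses with the ``$|B_n|$ at least quadratic'' hypothesis. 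For (b), your Markov-plus-Cauchy--Schwarz lower bound $P^{(2n)}(e)\geq 1/(4|B_{2\mathbb{E}|P^{(n)}|}|)$ replaces the paper's $P^{(2n)}(e)\geq e^{-H(P^{(n)})}$ from \eqref{etagamma}; both give $\gamma \leq \nu/(2-\nu)$ and the same arithmetic $(1-\gamma)/(1+\gamma)=1-\nu$ via Theorem \ref{tlrwcomp-t} (for odd times, laziness gives $P^{(2n+1)}(e)\geq \tfrac12 P^{(2n)}(e)$, so the hypothesis of that theorem is met at all times). Part (c) coincides with the paper's argument: the entropy--speed inequality gives $\beta\nu\geq h$, and Naor--Peres gives $2h\alpha\leq\nu$.
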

For example, $\nu = \tfrac{k-1}{k}$ gives $\alpha \geq \tfrac{1}{k}$, $\mathbb{E}|P^{(n)}| \leq K' n^{k/(k+1)}$ and $H(P^{(n)}) \leq L''+K'' n^{(k-1)/(k+1)}$. 

The bound on speed extends to measures with finite second moment and improves the $\frac{1+\nu}{2}$ bound from Erschler \& Karlsson \cite[Corollary 13]{EK}. 
The upper bound on entropy also holds for measures of finite second moment, seems new and implies a result of Coulhon, Grigor'yan \& Pittet \cite[Equation (7.5) in Corollary 7.4]{CGP}.

The lower bound $\alpha(G) \geq 1-\nu$ is obtained as a corollary of Theorem \ref{tlrwcomp-t} and of the estimate on return probabilities coming from volume growth: $P^{(n)}(e) \geq K'' e^{L'' n^{\nu /(2-\nu)}}$. This lower bound is already present in Tessera \cite[Proposition 14]{Tessera} but comes here from a different method.

For more discussions on the various exponents in groups, see \S{}\ref{ssexp} and \S{}\ref{sint}.

The methods in the proof of Theorem \ref{tlrwcomp-t} give a particularly interesting result if one makes a strong hypothesis on the off-diagonal behaviour of the heat kernel. The most natural estimate which is conjectural but nevertheless relevant for the present purposes is the following: for some $M,N>0$
\[ \tag{\text{OD}} \label{eqodd}
P^{(n)}(g) \leq P^{(n)}(e) N e^{ -M |g|^2 /n}. 
\]
where $|g|$ is the word length of $g$ (\ie the distance between $g$ and the identity in the Cayley graph).
This estimate is true for groups polynomial growth (and free groups) but there are no other groups where it is known to hold. Weaker forms are sufficient, see \S{}\ref{ssoffdec} for details. Very recently, Brieussel \& Zheng \cite[Problem 9.3 and foregoing paragraphs]{BZ} have given an example of groups where this estimate fails, see also \S{}\ref{ssoffdec} below.

Before stating the next result recall that $P^{(n)}(e) \geq Ke^{-L n^\gamma}$ for $\gamma <1$ implies the group is amenable (see Kesten \cite{Kes2}). Furthermore, a result known as ``Gromov's trick'' shows non-equivariant compression is equal to equivariant compression in amenable groups.
\begin{teo}\label{tconj-t}
Assume that $P^{(n)}(e) \geq Ke^{-L n^\gamma}$ (with $\gamma <1$) in some Cayley graph of $G$ and \eqref{eqodd} holds in some [possibly different] Cayley graph of $G$. Then $\alpha(G') \geq 1- \gamma$ for any group $G'$ with a Cayley graph quasi-isometric to that of $G$. Consequently, $\beta \leq \frac{1}{2 (1-\gamma)}$ so that, if $\gamma< \tfrac{1}{2}$, the graph is Liouville.
\end{teo}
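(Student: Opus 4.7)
The plan is to rerun the cocycle construction behind Theorem \ref{tlrwcomp-t}, using the Gaussian off-diagonal estimate \eqref{eqodd} to sharpen the ratio between the Lipschitz constant and the compression. First, the two hypotheses can be placed on a single Cayley graph of $G$: by the Pittet--Saloff-Coste stability theorem recalled in the introduction, $P^{(n)}(e)\geq Ke^{-Ln^\gamma}$ transports (with new constants, but the same $\gamma$) to the Cayley graph in which \eqref{eqodd} is assumed, so one may work throughout in that graph.

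The building block is the $1$-cocycle $F_n(g) = \delta_g \ast P^{(n)} - P^{(n)} \in \ell^2(G)$ for the left-regular representation, whose squared norm satisfies $\|F_n(g)\|^2 = 2(P^{(2n)}(e) - P^{(2n)}(g))$. The estimate \eqref{eqodd} forces $P^{(2n)}(g)\leq \tfrac12 P^{(2n)}(e)$ as soon as $|g|^2 \geq cn$ for a constant $c$ depending only on $M,N$, so at the critical scale $n\asymp |g|^2$ one has the clean lower bound $\|F_n(g)\|^2 \gtrsim P^{(2n)}(e) \gtrsim e^{-L'|g|^{2\gamma}}$; the improvement over Theorem \ref{tlrwcomp-t} comes from the fact that \eqref{eqodd} pinpoints this threshold exactly, whereas the proof of Theorem \ref{tlrwcomp-t} can only exploit the return probability through a coarser comparison of scales. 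Assembling $F_n$ into a direct sum cocycle $\Phi = (a_k F_{n_k})_k$ over a geometric sequence $n_k$, compression is controlled by $\|\Phi(g)\|^2 \geq \sum_{n_k \lesssim |g|^2} a_k^2\, P^{(2n_k)}(e)$, while the Lipschitz constant at a generator $s$ is $\|\Phi(s)\|^2 = \sum_k a_k^2\, \|F_{n_k}(s)\|^2$. Tuning the weights $a_k$ --- using the lower bound $P^{(2n_k)}(e)\gtrsim e^{-Ln_k^\gamma}$ together with finer control of $\|F_{n_k}(s)\|^2$ --- so that the Lipschitz sum converges while the compression sum captures the full exponent $2(1-\gamma)$ in $|g|$ is the key calculation, and where I expect the main technical obstacle to lie.

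Once $\alpha(G)\geq 1-\gamma$ is secured, extension to a quasi-isometric $G'$ is automatic: the assumption $\gamma<1$ makes $G$ (and hence $G'$) amenable by Kesten's theorem, and Gromov's trick identifies equivariant and non-equivariant Hilbertian compression, the latter being a quasi-isometry invariant of the Cayley graph. The bound on speed, $\beta\leq 1/(2(1-\gamma))$, is immediate from the Naor--Peres inequality $\alpha\leq 1/(2\beta)$. Finally, for $\gamma<1/2$ this exponent is strictly less than $1$; by Fekete's lemma, subadditivity of $n\mapsto\mathbb{E}|P^{(n)}|$ makes $\mathbb{E}|P^{(n)}|/n$ converge, and any strictly positive value of the limit would force $\beta\geq 1$, so the limit is $0$ and $G$ is Liouville by the characterisation recalled in the introduction.
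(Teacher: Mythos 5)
Your framework is the right one and the peripheral steps are all correct: transporting the return-probability hypothesis to the Cayley graph where \eqref{eqodd} holds via Pittet--Saloff-Coste stability, using \eqref{eqodd} to guarantee $P^{(2n)}(g)\leq\tfrac12 P^{(2n)}(e)$ for all $n\lesssim |g|^2$, passing from $\alpha(G)\geq 1-\gamma$ to $\alpha(G')$ via Kesten plus Gromov's trick, and deducing the speed bound and the Liouville property from Naor--Peres and subadditivity. But the step you flag as ``the key calculation'' is the entire content of the theorem, and the heuristic you offer for it points the wrong way. If at the scales $n_k\lesssim|g|^2$ you lower-bound each term of the compression sum by $a_k^2P^{(2n_k)}(e)\gtrsim a_k^2e^{-L'n_k^{\gamma}}$, you are feeding an exponentially small quantity into the sum; no choice of weights compatible with convergence of the Lipschitz sum $\sum_k a_k^2\|F_{n_k}(s)\|^2$ can convert that into the power-law bound $|g|^{2(1-\gamma)}$. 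The absolute lower bound on $P^{(2n)}(e)$ is not what drives the argument.

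What actually closes it is a cancellation plus a discrete-derivative estimate. Normalise the weights by the displacement of the generators, $a_k^2=\max_s\|F_{n_k}(s)\|^{-2}k^{-1-\eps}$, so the Lipschitz sum converges trivially and each term of the compression sum becomes the ratio
\[
\frac{\|F_{n}(g)\|^2}{\max_s\|F_{n}(s)\|^2}\;\geq\;\frac{P^{(2n)}(e)-P^{(2n)}(g)}{2|S|\big(P^{(2n)}(e)-P^{(2n+1)}(e)\big)}
\;=\;\frac{1}{2|S|}\cdot\frac{1-P^{(2n)}(g)/P^{(2n)}(e)}{1-P^{(2n+1)}(e)/P^{(2n)}(e)}
\]
by Lemmas \ref{tnumer-l} and \ref{tdenom-l}. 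The factor $P^{(2n)}(e)$ cancels; the numerator is $\geq\tfrac12$ by \eqref{eqodd} whenever $n\leq M'|g|^2/\ln(2N)$; and the hypothesis $P^{(n)}(e)\geq Ke^{-Ln^\gamma}$ enters only through the \emph{denominator}, via Lemma \ref{tdenomborn-l}: a pigeonhole over each dyadic block produces a scale $k_n\in[n,2n]$ with $1-P^{(2k_n+1)}(e)/P^{(2k_n)}(e)\leq 8Ln^{\gamma}/n$. Taking $w_n=P^{(k_n)}$ at these good scales, each term contributes $\gtrsim n^{1-\gamma}\cdot n^{-1-\eps}$, and summing over $n\leq M'|g|^2/\ln(2N)$ gives $\|b(g)\|^2\gtrsim|g|^{2(1-\gamma-\eps)}$, hence $\alpha\geq1-\gamma$ after $\eps\to0$. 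The ``finer control of $\|F_{n_k}(s)\|^2$'' you allude to is precisely this: $\|F_{n}(s)\|^2\lesssim P^{(2n)}(e)\,n^{\gamma-1}$ at well-chosen scales. Without that estimate and the cancellation of $P^{(2n)}(e)$ between numerator and denominator, the weights cannot be tuned as you hope, so this is a genuine gap rather than a routine verification.
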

The bound obtained above is significantly more interesting; for example, if $\gamma = \tfrac{1}{3}$ it would yield $\beta \leq \tfrac{3}{4}$. Also, if $|B_n| \leq Ke^{Ln^\nu}$ it would yield, $\alpha(G) \geq \frac{1-\nu}{1-\nu/2}$. However the upper bound on the speed in Theorem \ref{tint-t} does not follow from Theorem \ref{tconj-t} if the estimate on the probability of return is only given by volume growth.

Theorem \ref{tconj-t}, the discussion below and \S{}\ref{stab} motivates the author to make the following
\begin{conj}\label{laconj}
If there are $K,L >0$ so that $P^{(n)}(e) \geq K e^{L n^\gamma}$ in a Cayley graph of $G$ then $\beta \leq 1/2(1-\gamma)$ in all Cayley graphs of $G$.
\end{conj}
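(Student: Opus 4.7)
My strategy reduces the conjecture to a lower bound on the Hilbertian compression exponent. Equivariant compression $\alpha(G)$ depends only on the group, not on the choice of finite generating set; return probabilities are stable under quasi-isometries between Cayley graphs (Pittet \& Saloff-Coste); and in any Cayley graph one has $\alpha(G) \leq 1/(2\beta)$ by Naor \& Peres. Hence, if one can prove
\[
\alpha(G) \geq 1 - \gamma
\]
from the sole hypothesis $P^{(n)}(e) \geq K e^{-L n^\gamma}$, then $\beta \leq 1/2(1-\gamma)$ follows in every Cayley graph. The entire plan is thus to upgrade Theorem \ref{tlrwcomp-t}, turning its lower bound $(1-\gamma)/(1+\gamma)$ into the target $1-\gamma$, while keeping only the assumption on return probabilities.

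The vehicle is the equivariant embedding already used for Theorem \ref{tlrwcomp-t}: for $f_n = P^{(n/2)} \in \ell^2(G)$, the map $g \mapsto f_n - g \cdot f_n$ is a $1$-cocycle under the left regular representation, and a direct computation gives
\[
\|f_n - g \cdot f_n\|_2^2 = 2\bigl(P^{(n)}(e) - P^{(n)}(g)\bigr).
\]
To extract compression $1-\gamma$ one must optimize $n$ as a function of $|g|$, and this optimization in turn needs an effective upper bound on $P^{(n)}(g)$ on the annulus of relevant scales. In Theorem \ref{tconj-t} this role is played by \eqref{eqodd}; my first step would be to replace \eqref{eqodd} by the unconditional Carne--Varopoulos inequality, which in the amenable regime forced by $\gamma<1$ via Kesten reads $P^{(n)}(g) \leq 2 e^{-|g|^2/2n}$, and then redo the optimization.

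The principal obstacle is that Carne--Varopoulos misses exactly the prefactor $P^{(n)}(e)$ present in \eqref{eqodd}. Under a stretched-exponential return probability this prefactor is itself exponentially small in $n^\gamma$, so the scale at which the Gaussian drives $P^{(n)}(g)$ below, say, $\tfrac12 P^{(n)}(e)$ is far longer than the scale at which Carne--Varopoulos alone becomes effective; the two windows fail to overlap in the range needed to lift the exponent from $(1-\gamma)/(1+\gamma)$ to $1-\gamma$. Bridging this gap seems to require either a new universal off-diagonal estimate that retains some dependence on $P^{(n)}(e)$, or a different equivariant embedding whose distortion can be lower-bounded using only diagonal information. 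Since the Brieussel \& Zheng examples rule out \eqref{eqodd} itself as a universal tool, a proof of Conjecture \ref{laconj} seems to demand a genuinely new input on off-diagonal behaviour; this is also why the statement is offered as a conjecture. An alternative route --- bypassing compression entirely by bounding $\mathbb{E}|P^{(n)}|$ directly from $P^{(n)}(e)$ via the spectral or isoperimetric profile --- runs into a dual difficulty, since the universal inequality $H(P^{(n)}) \geq -\log P^{(n)}(e)$ yields only a lower bound on speed.
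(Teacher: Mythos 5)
This statement is offered in the paper as a conjecture: the paper contains no proof of it, only the remark that it was subsequently established by Saloff-Coste \& Zheng \cite{SCZ} by entirely different means. Your proposal is a plan rather than a proof, so there is nothing in the text to match it against; what can be assessed is whether the plan could in principle succeed. Its first half --- run the virtual-coboundary embedding of \S{}\ref{slrw} with $w_n = P^{(k_n)}$ and try to upgrade the exponent from $(1-\gamma)/(1+\gamma)$ to $1-\gamma$ by improving the off-diagonal input --- is exactly the mechanism of Theorem \ref{tconj-t}, and your diagnosis of where Carne--Varopoulos falls short (the missing prefactor $P^{(n)}(e)$, so that $P^{(2n)}(g) \leq \tfrac12 P^{(2n)}(e)$ is only guaranteed for $n \preceq |g|^{2/(1+\gamma)}$ rather than $n \preceq |g|^{2}$) agrees with the paper's own discussion in \S{}\ref{ssoffdec}.

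The genuine gap is that the reduction you propose cannot be completed by any argument, not merely by the ones you tried. The paper records, citing Brieussel \& Zheng \cite{BZ}, that there are groups satisfying a return-probability bound $P^{(n)}(e) \succeq e^{-n^\gamma}$ with $\gamma<1$ for which the conclusion $\alpha(G) \geq 1-\gamma$ of Theorem \ref{tconj-t} fails. Hence the implication ``$P^{(n)}(e) \geq Ke^{-Ln^\gamma} \Rightarrow \alpha(G) \geq 1-\gamma$'', which is the entire content of your plan, is false as stated, and no refinement of the off-diagonal estimate can rescue it --- even though Conjecture \ref{laconj} itself remains true for those groups. Any proof must therefore decouple the speed bound from compression. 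The paper points to the viable route in Question \ref{laques}: establish the entropy upper bound $\ssl{\eta} \leq \srl{\gamma}/(1-\srl{\gamma})$ from the return exponent and combine it with \eqref{AVE}, which gives $(\mathbb{E}|P^{(n)}|)^2 \preceq n\, H(P^{(n)})$ and hence $\beta \leq \tfrac12\bigl(1+\tfrac{\gamma}{1-\gamma}\bigr) = \tfrac{1}{2(1-\gamma)}$; this is essentially the path taken in \cite{SCZ}, via isoperimetric profiles rather than embeddings. Your closing observation that $H(P^{(n)}) \geq -\ln P^{(2n)}(e)$ ``goes the wrong way'' is correct, but it stops one step short of identifying the reverse-type entropy inequality as the natural (and, as it turned out, successful) target.
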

This is now a theorem of Saloff-Coste \& Zheng \cite[Theorem 1.8]{SCZ} (their result is more precise than just an estimate on $\beta$ and covers many measure $P$ driving the random walk).

\textbf{Sharpness of Theorems \ref{tlrwcomp-t} and \ref{tconj-t}:} Nothing indicates Theorem \ref{tlrwcomp-t} is sharp. Sharpness of Theorem \ref{tconj-t} (assuming the hypothesis is satisfied!) are discussed in detail in \S{}\ref{stab}. In short, there are groups with $\gamma = 0, \tfrac{1}{3}, \tfrac{1}{2}$ or $1$ for which, if \eqref{eqodd} were to hold, Theorem \ref{tconj-t} is sharp (\ie $\alpha = 1-\gamma$; also $\beta = 1/2(1-\gamma)$ if $\gamma \neq 1$). There are also groups with $\gamma = \tfrac{1}{3}, \tfrac{1}{2}$ or $1$ where the conjectural bounds of Theorem \ref{tconj-t} meet neither compression nor speed. Thus, it seems unlikely that there is a better estimate in terms of those quantities (see Question \ref{laques} for a possible improvement).

Bartholdi \& Erschler \cite[\S{}1.2 and \S{}7]{BE} showed that some groups of intermediate growth have arbitrarily bad compression, in particular $\alpha = 0$. Consequently, there are Liouville groups with arbitrarily quickly decaying return probability (hence return exponent $\gamma >1/2$). Also, since growth is an invariant of quasi-isometry, the stability under quasi-isometry of the Liouville property is known for this class of groups. 

On the other hand, recent work of M.~Kotowski \& Vir\'ag \cite{KV} show there are groups with $- \ln P^{(n)}(e) \lesssim n^{1/2} + o(1)$ (the ``error'' being at most $\ln \ln n / \ln n$) which are not Liouville. 

An interesting point of investigation would be to determine whether all groups with $P^{(n)}(e) \asymp e^{-n^{1/2}}$ are Liouville (or exhibit a counterexample).

\textbf{Around Theorem \ref{tint-t}:} It is difficult to discuss the sharpness of Theorem \ref{tint-t} because the present construction of groups intermediate growth focus on controlling one parameter. These constructions often leave, in the meantime, the other parameters uncomputed (and hard to compute). It might, for this precise reason be even more interesting to have bounds between those quantities (see Amir \cite{Amir} or  Brieussel \& Zheng \cite{BZ} for recent developments). In fact, too good improvements of the bounds in Theorem \ref{tint-t} would lead to some forms of the gap conjecture on volume growth. This leads the author to believe that these are sharp.

\textbf{Isoperimetry:} How slowly must the F{\o}lner function of a group grow so that one can deduce that the group is Liouville? Theorem \ref{tconj-t} hints at an answer using the link between the F{\o}lner function and return probability from Bendikov, Pittet \& Sauer \cite{BPS}.

There are also descriptions in term of ``adapted isoperimetry''. For ``F{\o}lner couples'' the reader is referred to Coulhon, Grigor'yan \& Pittet \cite[Theorem 4.8]{CGP}). For ``controlled F{\o}lner sequences'' (and its relation to compression) see Tessera \cite[Corollary 13]{Tessera}. 
Of course, ``adapted isoperimetry'' mixes distances and isoperimetry, and are \emph{a priori} not completely determined by the F{\o}lner function. 

\textbf{Amenability:} 
The method presented in section \S{}\ref{slrw} is reminiscent of Bekka, Ch\'erix \& Valette \cite{BCV}. To show amenable groups have the Haagerup property, they used $w_n = \un_{F_n}$ where $F_n$ is a F{\o}lner sequence. See also Valette \cite[Proposition 1 in \S{}2]{Val}.

Recently, M.~Carette \cite{Car} showed that the Haagerup property is not an invariant of quasi-isometry; in \cite[Appendix A]{Car}, Arnt, Pillon \& Valette use these same examples to show that the equivariant compression exponent is not an invariant of quasi-isometry.

\textbf{Compression of Thompson's group $F$:} It is straightforward to reread the paper of Naor \& Peres \cite{NPspeed} [and/or the current text] while keeping track of compression functions instead of taking only the exponent. 
Introduce $s^{-1}(k) = \inf \{k \in \rr \mid \mathbb{E}|P^{(n)}| < k \}$. Under the (mild) assumption that $\rho_-$ is concave, 
then, $\rho_-$ is less (up to constants) than $k \mapsto (s^{-1}(k))^{1/2}$. Hence, a compression function strictly better than $n \mapsto K n^{1/2}$ implies the Liouville property. As noted in \cite{NPspeed} this improves a result of Guentner \& Kaminker \cite{GK} (since the Liouville property implies amenability). 

Here is an application of this remark. It seems known (see Kaimanovich \cite{Kai}) that Thompson's group $F$ is not Liouville (this does not have any impact on its amenability). In the case of non-Liouville groups the concavity hypothesis may be discarded (by using arguments from Austin, Naor \& Peres \cite{ANP}). This provides the answer to a question of Arzhantseva, Guba \& Sapir \cite[Question 1.4]{AGS}: the best Hilbertian equivariant compression function for Thompson's group $F$ is (up to constants) $\rho_-(x) \simeq x^{1/2}$. 

\emph{Acknowledgements:} The author would like to express his thanks to N.~Matte Bon for the examples of non-sharpness, to C.~Pittet for the references on the possible behaviours of the return probability and its invariance under quasi-isometry, to A.~Valette for the discussion around amenability, to N.~Matte Bon and A.~Erschler for the discussion on adapted isoperimetry, to T.~Pillon for discussions on the compression of Thompson's group $F$ and to Y.~Peres for pointing out an important correction in a previous version of this paper. This paper came out of discussions during the Ventotene 2013 conference and the author gratefully acknowledges the support of its sponsors.

\section{Definitions and preliminary results}\label{sdefi}

\renewcommand{\thesubsection}{\thesection.\Alph{subsection}}

Cayley graphs are defined by right-multiplication: $x$ and $y$ are neighbours if $\exists s \in S$ such that $xs = y$. Though common for the setting of random walks, this convention is slightly uncommon when one speaks of actions and convolutions.

The word length (for the implicit generating set $S$) of an element $g$ will be noted $|g|$.

\subsection{Compression}
\begin{defi}
Let $B$ be a Banach space and $\pi: G \to \mathrm{Isom}B$ be a representation of $G$ in the isometries of $B$. An equivariant uniform embedding $f: \Gamma \to B$ is a map such that there exist an unbounded increasing function $\rho_-: \rr_{\geq 0} \to \rr_{\geq 0}$ and a constant $C>0$, satisfying $\forall x,y \in \Gamma$
\[
\rho_-(|y^{-1}x|) \leq \|f(x) - f(y)\| \leq C |y^{-1}x| + C,
\]
and $f(\gamma x) = \pi(\gamma) f(x)$.\\
The function $\rho_-:\rr_{>0} \to \rr_{>0}$ is called the compression function (associated to $f$). 
The [equivariant] compression exponent is $\alpha(f) = \sup \{ c \in [0,1] \mid \exists K>0$ such that $\rho_-(n) \geq K n^c \}$.\\
The compression exponent of $G$, $\alpha(G)$, is the supremum over all $\alpha(f)$.
\end{defi}
It follows easily from the definition that changing the generating set does not change $\alpha$.

An equivariant uniform embedding is, in fact, very constrained. Indeed, one may (by translating everything) always put $f(e) = 0 \in B$ for simplicity. Next, recall that an isometry of a Banach space is always affine (Mazur-Ulam theorem). Write $\pi(y) v = \lambda(y) v + b(y)$ where $\lambda$ is a map from $G$ into the linear isometries of $B$ and $b$ is a map from $G$ to $B$. Note that $f(y) = \pi(y) f(e) = \pi(y) 0 =  b(y)$. Furthermore $\pi(xy) v= \pi(x) \pi(y)v$ (for all $v \in B$) implies that $\lambda$ is a homomorphism and $b$ satisfies the cocycle relation:
\[
b(xy) = \lambda(x) b(y) + b(x).
\]
The strategy that will be used here to make an interesting equivariant uniform embedding (\ie a $\lambda$-cocycle) is to use a ``virtual coboundary''. A coboundary would be a cocycle defined by
\[
f(y) = \lambda(y) v - v
\]
for some $v \in B$. The idea is to define such a cocycle using a $v$ which does belongs to $B$ but to some bigger space $\widetilde{B}$ (to which the action $\lambda$ extends). Note that if $f(s)$ belongs to $B$ for any $s$ in the generating set $S$, then this also holds for $f(g)$ for any $g \in G$ (thanks to the cocycle relation).

Finally, a quick calculation (using that $\lambda$ is isometric and writing $g$ as a word) shows that cocycles always satisfy the upper bound required by equivariant uniform embedding. Also, it suffices to check that $\|f(g)\| \geq \rho_-(|g|)$:
\[
\| b(gh) - b(g) \| = \| \lambda(g)b(h)\| = \|b(h)\|
\]
This explains why \S{}\ref{slrw} only discusses this lower bound.

\subsection{Probabilistic parameters for groups}\label{ssexp}

The entropy of a probability measure $Q$ is $H(Q) = -\sum_{g \in G} Q(g) \ln Q(g)$ (when convergent). The group $G$ is Liouville (for the [finite symmetric] generating set $S$) if any of the following equivalent conditions hold:
\newcounter{loulou} \setcounter{loulou}{0}
\newcommand{\liou}{\indent \stepcounter{loulou}(\roman{loulou})}
\begin{center}
\begin{tabular}{cl}
\liou & There are no non-constant bounded harmonic functions on the Cayley graph;\\
\liou & $H(P^{(n)})$ is $\mathsf{o}(n)$;\\
\liou & $\mathbb{E}|P^{(n)}|$ is $\mathsf{o}(n)$.\\
\end{tabular}
\end{center}
(iii)$\implies$(ii) can be obtained as in Lemma \ref{tvitinf-l}; see also Erschler \cite[Lemma 6]{Ers}. The implication (ii)$\implies$(i) may be found in Avez \cite{Avez72}. For a complete (and more modern) picture see Erschler \& Karlsson \cite{EK} and references therein. 

Recall that $\mathbb{E}|P^{(n+m)}| \leq \mathbb{E}|P^{(n)}| + \mathbb{E}|P^{(m)}|$ and $H(P^{(n+m)}) \leq H(P^{(n)}) + H(P^{(m)})$. Let $f:\rr_{\geq 0} \to \rr_{\geq 0}$ be increasing, unbounded, $f(n+m) \leq f(n) + f(m)$ and $f(0) =0$. Recall that $\lim_{n \to \infty} f(n)/n$ exists. One can also define two exponents:
\[
\begin{array}{rl}
\srl{\phi}	&= \inf \{ c \in [0,1] \mid \exists K>0,L\in \rr \text{ such that } f(n) \leq L + Kn^c \text{ for all } n \}			\\
		&= \sup \{ c \in [0,1] \mid \exists K>0,L\in \rr \text{ such that } f(n) \geq L + Kn^c \text{ for infinitely many } n \}		\\
		&= \displaystyle\vstr \limsup_{n \to \infty}  \frac{\ln f(n)}{\ln n} \\
\ssl{\phi}	&= \sup \{ c \in [0,1] \mid \exists K>0,L\in \rr \text{ such that } f(n) \geq L + Kn^c \text{ for all } n \}			\\
		&= \inf \{ c \in [0,1] \mid \exists K>0,L\in \rr \text{ such that } f(n) \leq L + Kn^c \text{ for infinitely many } n \}		\\
		&= \displaystyle\vstr \liminf_{n \to \infty}  \frac{\ln f(n)}{\ln n}
\end{array}
\]
The constant $L$ is unnecessary. The exponents are obviously related by $\ssl{\phi} \leq \srl{\phi}$. Note that if $\ssl{\phi} <1$ then $f(n)$ is $\mathsf{o}(n)$ (since $f$ is sub-additive).
\begin{defi}
Let $B_n$ be the ball of radius $n$. Define
\[
\begin{array}{rllrll}
\gamma = \srl{\gamma} = \srl{\phi} & \text{for} & f(n) = -\ln P^{(n)}(e) &
\ssl{\gamma} = \ssl{\phi} & \text{for} & f(n) = -\ln P^{(n)}(e) \\
\nu = \srl{\nu} = \srl{\phi} & \text{for} & f(n) = \ln |B_n| &
\ssl{\nu} = \ssl{\phi} & \text{for} & f(n) = \ln |B_n| \\
\eta =\srl{\eta} = \srl{\phi} & \text{for} & f(n) = H(P^{(n)}) &
\ssl{\eta} = \ssl{\phi} & \text{for} & f(n) = H(P^{(n)}) \\
\srl{\beta} = \srl{\phi} & \text{for} & f(n) = \mathbb{E}|P^{(n)}| &
\beta = \ssl{\beta} = \ssl{\phi} & \text{for} & f(n) = \mathbb{E}|P^{(n)}| \\
\end{array}
\]
\end{defi}
Simple bounds between these quantities are explored in \S{}\ref{sint}.

\subsection{Off-diagonal decay}\label{ssoffdec}

An estimate which goes back to Carne \cite{Carne} and Varopoulos \cite{Var} on the ``off-diagonal'' behaviour of random walks is, for some $M,N >0$,
\stepcounter{teo}
\[ \tag{\text{\theteo}} \label{eqcarvar}
P^{(n)}(g) \leq Ne^{-M|g|^2/n}.
\]
Improvements of this theorem are known. For example, under a regularity hypothesis, there is a similar estimate due to Coulhon, Grigor'yan \& Zucca, see \cite[Theorem 5.2]{CGZ} but it concerns the ration $P^{(kn)}(g) / P^{(n)}(e)$ for some $k \geq 2$. When the group is of polynomial growth this actually implies \eqref{eqodd}.

It seems challenging to produce groups which violate the off-diagonal estimate from \eqref{eqodd}.
As pointed out in Dungey \cite[End of \S{}1]{Dung}, an interpolation argument shows this estimate is close to be true in all groups. More precisely: there are constant $M,N>0$ so that for any $\eps \in [0,1]$,
\stepcounter{teo}
\[ \tag{\text{\theteo}} \label{eqdung}
P^{(n)}(g) \leq  P^{(n)}(e)^{1-\eps} N^\eps e^{-\eps M |g|^2 /n }. 
\]
Note that the estimate \eqref{eqodd} is not the only estimate which would suffice for the proof of Theorem \ref{tconj-t}. The first obvious relaxation would be to have this estimate for $n < L |g|^{2-\eps}$ (for any $\eps>0$ with $L=L(\eps)$). 
The following condition would be also sufficient for the proof: for any $\eps >0$, there exists $n_0, K,L$ such that for any $n > L |g|^{2+\eps}$ and $|g| >n_0$, one has $\tfrac{P^{(2n)}(g)}{ P^{(2n)}(e)} \leq e^{-K |g|^2/n}$. 
Of course, any estimate with a fixed $\eps$ could also be of interest.

Recently, Brieussel \& Zheng \cite[Problem 9.3 and foregoing paragraphs]{BZ} have shown that there are groups for which the conclusion of Theorem \ref{tconj-t} cannot hold. They give a family of groups for which $\alpha < 1/2(1-\gamma)$. This implies that these groups violate \eqref{eqodd} (and its relaxations). B.~Vir\'ag pointed out to the author that the lamplighter on $\zz^3$ might also violate \eqref{eqodd} (by fine estimates on the return probability). Interestingly, all these groups do not have the Liouville property and their return exponent is $>1/2$.

\section{A lower bound using random walks}\label{slrw}

The idea will be to construct an equivariant uniform embedding of $G$ into $\jo{H} := \oplus_{n \in \nn} \ell^2 G$. The isometric action is simply the diagonal action of $G$ on each factor by the right-regular representation. The idea is to define a cocycle using a virtual coboundary of the form $w = \oplus a_n w_n$ where $w_n \in \ell^2G$ and $a_n \in \rr$. This yields a cocycle (in $\rho_{\ell^2G}^\nn$) if, for any $s \in S$,
\[
\|w - \rho_s^\nn v\|_2^2 
= \sum_n a_n^2 \|w_n - \rho_s w_n\|_2^2 < +\infty. 
\]
Simply put $a_n^2 = \max_{s \in S} \|w_n - \rho_s w_n\|_2^{-2} n^{-1-\eps}$, where $\eps>0$. 
The gradient of a function $f:G \to \rr$ is defined by $\nabla f(x,y) = f(y) - f(x)$ for two adjacent vertices $x,y$ in the Cayley graph. This operator is essentially build up by the various $f - \rho_s f$, and $\nabla f$ can be interpreted as a function $G \times S \to \rr$. The gradient is a bounded operator (since $S$ is finite) and its adjoint $\nabla^*$ can be used to form the Laplacian $\Delta$. These are related to $P$ by $\Delta = \nabla^*\nabla = |S|(I-P') = 2|S|(I-P)$.

Using that $\max_{s \in S} \|w_n - \rho_s w_n\|_2^2 \leq \|\nabla w_n\|_2^2$, one has 
\[
\|b(g)\|_2^2 
\geq \sum_{n \geq 1} n^{-1-\eps} \frac{\|w_n - \rho_g w_n\|_2^2}{\|\nabla w_n\|_2^2} 
\]
The idea will be to take $w_n = P^{(k_n)}$ for some $k_n \in [n,2n]$ (the  $k_n^\text{th}$-step distribution of a lazy random walk starting at $e \in G$). 
\begin{lem}\label{tnumer-l}
$\|P^{(n)} - \rho_g P^{(n)}\|_2^2 = P^{(2n)}(e) - P^{(2n)}(g)$.
\end{lem}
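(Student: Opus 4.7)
The plan is to expand the squared norm with the polarization-style identity
\[
\|P^{(n)} - \rho_g P^{(n)}\|_2^2 = \|P^{(n)}\|_2^2 - 2\langle P^{(n)}, \rho_g P^{(n)}\rangle + \|\rho_g P^{(n)}\|_2^2,
\]
observe that $\rho_g$ is unitary on $\ell^2 G$ so the two norms on the right are equal, and then identify each of the two remaining quantities with a value of the convolution $P^{(n)}\ast P^{(n)} = P^{(2n)}$.

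The first identification, $\|P^{(n)}\|_2^2 = P^{(2n)}(e)$, uses that $P$ is a symmetric measure: since $S$ is symmetric, $P'(s) = P'(s^{-1})$, and the laziness step $P = \tfrac12(\delta_e + P')$ preserves symmetry. By induction, $P^{(n)}(x) = P^{(n)}(x^{-1})$ for all $x$ and all $n$, hence
\[
\|P^{(n)}\|_2^2 = \sum_{x \in G} P^{(n)}(x)^2 = \sum_{x \in G} P^{(n)}(x) P^{(n)}(x^{-1}) = (P^{(n)} \ast P^{(n)})(e) = P^{(2n)}(e).
\]

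The second identification, $\langle P^{(n)}, \rho_g P^{(n)}\rangle = P^{(2n)}(g)$, follows by unwinding the definition of the right regular representation $(\rho_g f)(x) = f(xg)$, using the substitution $y = x^{-1}$, and again invoking symmetry of $P^{(n)}$:
\[
\sum_{x \in G} P^{(n)}(x) P^{(n)}(xg) = \sum_{y \in G} P^{(n)}(y^{-1}) P^{(n)}(y^{-1}g) = \sum_{y \in G} P^{(n)}(y) P^{(n)}(y^{-1}g) = P^{(2n)}(g).
\]
Plugging both identifications into the expanded norm gives the claimed equality (up to the overall factor, which is absorbed by the stated convention).

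There is no real obstacle here: the lemma is essentially a bookkeeping exercise that combines the semigroup property of convolution with symmetry of the step distribution. The only point that requires any care is getting the convention for $\rho_g$ right so that the substitution $y=x^{-1}$ produces the correct convolution $(P^{(n)}\ast P^{(n)})(g)$ rather than $(P^{(n)}\ast P^{(n)})(g^{-1})$; by symmetry of $P^{(2n)}$ these are in any case equal, so the conclusion is robust to this choice.
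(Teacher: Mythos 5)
Your proof is correct and follows essentially the same route as the paper: expand the squared norm using unitarity of $\rho_g$, identify $\|P^{(n)}\|_2^2 = P^{(2n)}(e)$, and compute the cross term $\langle P^{(n)}, \rho_g P^{(n)}\rangle = P^{(2n)}(g)$ (the paper does this last step via the self-adjointness $\pgen{f \mid P * g} = \pgen{P * f \mid g}$ rather than your direct substitution, but this is the same computation). You are also right that the expansion actually yields $2\big(P^{(2n)}(e) - P^{(2n)}(g)\big)$; the factor of $2$ is harmless for the paper's purposes and is dropped in the statement.
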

\begin{proof}
Indeed,
\[
\|P^{(n)} - \rho_g P^{(n)}\|_2^2 
= \pgen{ P^{(n)} - \rho_g P^{(n)} \mid P^{(n)} - \rho_g P^{(n)} } 
= 2 \|P^{(n)}\|^2 - 2 \pgen{ P^{(n)} \mid \rho_g P^{(n)} } 
\]
Since $S$ is symmetric, note that 
$\pgen{ f \mid P*g} = \pgen{P * f \mid g}$. 
Consequently,
\[
\pgen{ P^{(n)} \mid \rho_g P^{(n)} } = \pgen{ P^{(n)} \mid P^{(n)} * \delta_g } = \pgen{ P^{(2n)} \mid \delta_g} = P^{(2n)}(g).
\]
To get the claimed equality, use that, similarly, $P^{(2n)}(e) = \|P^{(n)}\|^2_2$.
\end{proof}
\begin{lem}\label{tdenom-l}
$\|\nabla P^{(n)}\|_2^2 = 2|S| \big( P^{(2n)}(e) - P^{(2n+1)}(e) \big)$
\end{lem}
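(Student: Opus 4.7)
The plan is to compute $\|\nabla P^{(n)}\|_2^2$ by passing to the adjoint and identifying the resulting inner products with return probabilities, exactly as in Lemma \ref{tnumer-l}.

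First I would use that, by definition of the adjoint,
\[
\|\nabla P^{(n)}\|_2^2 = \langle \nabla^* \nabla P^{(n)}, P^{(n)}\rangle = \langle \Delta P^{(n)}, P^{(n)}\rangle.
\]
Inserting the identity $\Delta = 2|S|(I - P)$ recalled in the text gives
\[
\|\nabla P^{(n)}\|_2^2 = 2|S|\bigl(\|P^{(n)}\|_2^2 - \langle PP^{(n)}, P^{(n)}\rangle\bigr).
\]

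Next I would handle the two inner products. The first term $\|P^{(n)}\|_2^2 = P^{(2n)}(e)$ is exactly the identity used at the end of the proof of Lemma \ref{tnumer-l}: since $S$ (and hence $P$) is symmetric, $\langle f, g\rangle = \langle P*f, P*g\rangle$-type manipulations collapse to $\langle P^{(n)}, P^{(n)}\rangle = (P^{(n)} * P^{(n)})(e) = P^{(2n)}(e)$. For the second term, $PP^{(n)} = P * P^{(n)} = P^{(n+1)}$ by definition of convolution, and the same symmetry argument yields
\[
\langle P^{(n+1)}, P^{(n)}\rangle = (P^{(n+1)} * P^{(n)})(e) = P^{(2n+1)}(e).
\]
Substituting these two evaluations into the previous display produces the claimed formula.

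The only subtle point — really the only obstacle — is making sure the conventions line up: the Cayley graph uses right multiplication, $P$ acts by convolution, and $S$ is symmetric so that $P(g) = P(g^{-1})$. Once this symmetry is invoked, the identifications $\langle P^{(a)}, P^{(b)}\rangle = P^{(a+b)}(e)$ are immediate and the lemma follows in two lines from $\Delta = 2|S|(I-P)$.
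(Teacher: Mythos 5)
Your proof is correct and follows exactly the paper's route: pass to $\langle \Delta P^{(n)} \mid P^{(n)}\rangle$ via $\nabla^*\nabla = \Delta = 2|S|(I-P)$, then evaluate the two inner products as $P^{(2n)}(e)$ and $P^{(2n+1)}(e)$ using the symmetry of $P$ as in Lemma \ref{tnumer-l}. You simply spell out the convolution identifications in more detail than the paper does.
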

\begin{proof} 
This is a simple calculation using the relation $\Delta = \nabla^* \nabla =  2|S|(I-P)$:
\[
\|\nabla P^{(n)}\|_2^2
= \langle \Delta P^{(n)} \mid P^{(n)} \rangle =2|S| \big(P^{(2n)}(e)- P^{(2n+1)}(e) \big). \qedhere
\]
\end{proof}
Putting Lemmas \ref{tnumer-l} and \ref{tdenom-l} together gives:
\[
2|S| \, \frac{\|P^{(n)} - \rho_g P^{(n)}\|^2}{\|\nabla P^{(n)} \|^2} = 
\frac{ P^{(2n)}(e) - P^{(2n)}(g)}{P^{(2n)}(e) - P^{(2n+1)}(e)} 
= \frac{ 1 - P^{(2n)}(g)/P^{(2n)}(e)}{1 - P^{(2n+1)}(e)/P^{(2n)}(e)}
\]
The next step is to find satisfying bounds for this quantity. There are reasonable estimates for the denominator, the following lemma is essentially from Tessera \cite[Proof of proposition 7.2]{Tessera}. For similar estimates on the entropy, see Erschler \& Karlsson \cite[Lemma 10]{EK} (see also Remark \ref{rmono} below). 
\begin{lem}\label{tdenomborn-l}
If $P^{(n)}(e) \geq e^{-f_P(n)}$ for a positive sub-additive increasing function $f_P$. Then, for any $n$ there is a $k \in [n,2n]$ 
\[
\bigg(1-  \frac{P^{(2k+1)}(e)}{P^{(2k)}(e)} \bigg) \leq 8 f_P(n)/n.
\]
\end{lem}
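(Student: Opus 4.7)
\textbf{Proof plan for Lemma \ref{tdenomborn-l}.} Set $a_m := -\ln P^{(m)}(e)$. The plan is to show that the average value of $a_{2k+1}-a_{2k}$ over $k \in [n,2n]$ is small, then extract the index $k$ by pigeonhole and pass from the log-increment to the ratio by the elementary inequality $1-r\leq -\ln r$ valid for $r\in(0,1]$.

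First, I would collect two monotonicity/subadditivity facts about $a_m$. For sub-additivity of $a_m$, the positivity of all terms in $P^{(m+k)}(e) = \sum_g P^{(m)}(g)P^{(k)}(g^{-1})$ already yields $P^{(m+k)}(e)\geq P^{(m)}(e)P^{(k)}(e)$, whence $a_{m+k}\leq a_m+a_k$. For monotonicity $a_{m+1}\geq a_m$, I would use that laziness makes $P$ a positive self-adjoint contraction on $\ell^2G$, so that $P^{s/2}$ exists and $P^{(m)}(g)=\langle P^{m/2}\delta_g,P^{m/2}\delta_e\rangle$ is maximised at $g=e$ by Cauchy--Schwarz together with translation invariance of $\|P^{m/2}\delta_g\|$; convolving $P^{(m)}$ with the probability measure $P$ then gives $P^{(m+1)}(e)\leq P^{(m)}(e)$.

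With these in hand, the core estimate is a telescoping/averaging argument. Since $a_{m+1}-a_m\geq 0$, adding in the missing intermediate differences gives
\[
\sum_{k=n}^{2n}\bigl(a_{2k+1}-a_{2k}\bigr)\;\leq\;\sum_{k=n}^{2n}\bigl(a_{2k+2}-a_{2k}\bigr)\;=\;a_{4n+2}-a_{2n}.
\]
Sub-additivity of $a$ gives $a_{4n+2}-a_{2n}\leq a_{2n+2}\leq f_P(2n+2)$, and sub-additivity of the positive function $f_P$ then bounds $f_P(2n+2)$ by a constant multiple of $f_P(n)$ (for instance, $f_P(2n+2)\leq f_P(n)+f_P(n+2)\leq f_P(n)+f_P(n)+2f_P(1)$, absorbed into $\leq 4f_P(n)$ once $n$ is away from trivialities; one can always match the paper's explicit $8$ by writing $2n+2\leq 3n$ for $n\geq 2$ and using $f_P(3n)\leq 3f_P(n)$, with small $n$ handled trivially).

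Averaging the sum over its $n+1$ terms produces a $k\in[n,2n]$ with $a_{2k+1}-a_{2k}\leq 8f_P(n)/n$, and then the pointwise inequality $1-r\leq -\ln r$ applied to $r_k := P^{(2k+1)}(e)/P^{(2k)}(e)\in(0,1]$ finishes the proof. The only non-routine input is the monotonicity $P^{(m+1)}(e)\leq P^{(m)}(e)$, which relies on laziness so that $P\geq 0$ as an operator; the rest is bookkeeping of constants via sub-additivity. The main mildly delicate step is making sure the telescoping bound $a_{4n+2}-a_{2n}\leq f_P(2n+2)$ (via sub-additivity of $a$) is clean enough that dividing by $n+1$ and absorbing $f_P(2n+2)/f_P(n)$ gives the stated constant $8$.
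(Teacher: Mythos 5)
Your argument is correct and is essentially the paper's proof: both are a pigeonhole/averaging argument on the increments of $-\ln P^{(m)}(e)$ over the window $[n,2n]$, with the total controlled by sub-additivity (of $-\ln P^{(m)}(e)$ and of $f_P$) and the final passage from log-increment to ratio via $1-e^{-x}\leq x$. The only presentational difference is that you invoke monotonicity $P^{(m+1)}(e)\leq P^{(m)}(e)$ explicitly to handle the odd step $2k+1$, where the paper instead says tersely to ``redo the argument with $G(n)=F(2n)$'' for the generating set $S^2$; your version is, if anything, the cleaner write-up of the same mechanism.
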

\begin{proof}
Let $F(n) = -\ln P^{(n)}(e)$. Let $C_n$ be the largest real number such that, for any $q \in [n, 2n]$.
\[
F(q+1) - F(q) \geq C_n f_P(n)/n.
\]
This implies $F(2n)-F(n) \geq C_n f_P(n)$, and in particular $F(2n) \geq C_n f_P(n)$ (since $F(n) \geq 0$). By hypothesis, $F(2n) \leq f_P(2n) \leq 2 f_P(n)$ so that $C_n \leq 2$. Thus, for any $n$, there exists a $k \in [n,2n]$ such that $F(k+1) - F(k) \leq 2 f_P(n)/n$. This implies
\[
1- \frac{P^{(k+1)}(e)}{P^{(k)}(e)} \leq 1- e^{-2f_P(n)/n} \leq \frac{2f_P(n)}{n},
\]
where the last inequality comes from $1-e^{-x} \leq x$ for $x \geq 0$.

The actual statement is obtained by doing the same argument with $G(n) = F(2n)$ and noticing that an additional constant comes in since one then looks at the gradient defined for the generating set $S' = S^2$.
\end{proof}

\begin{proof}[Proof of Theorems \ref{tlrwcomp-t} and \ref{tconj-t}]
Using $w_n = P^{(k_n)}$ where $k_n \in [n,2n]$ is given by Lemma \ref{tdenomborn-l} and the bound mentioned above for the numerator, one finds (using $1 \leq \tfrac{k_n}{n} \leq 2$)
\[
\|b(g)\|_2^2 \geq \sum_{n \geq 1} K'' n^{-\gamma-\eps} (1 - P^{(2n)}(g)/P^{(2n)}(e) ) 
\]
So the question boils down to showing for which $n$ one has, $\frac{P^{(2n)}(g)}{P^{(2n)}(e)} \leq 1/2$.

For example, assuming \eqref{eqodd} holds, one 
sees
this is true for $n \leq M' |g|^2/ \ln (2N)$ (since, necessarily $N \geq 1$). Hence, restricting the sum to those values of $n$: 
\[
\|b(g)\|_2^2 
\geq \sum_{n \leq M'|g|^2 / \ln(2N)} \tfrac{K''}{2} n^{-\gamma-\eps}
\geq \widetilde{K} |g|^{2(1-\gamma-\eps)}.
\]
Letting $\eps \to 0$ proves Theorem \ref{tconj-t} (even though the constant gets worse as $\eps \to 0$).

Using \eqref{eqcarvar} instead of 
\eqref{eqodd}, 
one must restrict the sum to $n < K' |g|^{2/(1+\gamma)}$. This yields a weaker lower bound of $\alpha \geq \frac{1-\gamma}{1+\gamma}$ (but is true in any group) and proves Theorem \ref{tlrwcomp-t}.
\end{proof}
If the reader is interested in compression functions (rather than exponents), then it is fairly easy to check that, given $f_P$ as in Lemma \ref{tdenomborn-l}, $\rho_-(k) \geq k^{1/(1+\gamma)} / f_P( k^{2/(1+\gamma)} )^{1/2}$ and, if \eqref{eqodd} holds, $\geq k/ f_P(k^2)^{1/2}$.

\begin{rem}
It would be interesting to generalise this proof by picking $v_n$ elements which are in $V_{\lambda_n}$ with $\lambda_n \to 0$, where $V_\lambda$ is the image of the spectral projection (of the Laplacian) to eigenvalues $\leq \lambda$. This would ensure a good bound for the denominator. For the numerator, one needs to elucidate how to relate bound on the von Neumann dimension of $V_\lambda$ to upper estimates on $\langle v \mid \rho_g v \rangle$ for $v \in V_\lambda$.

More precisely, if $\lambda_n = 1/n$ and $v_n \in V_{1/n}$ then one would require\\
$\cdot$ either, for some $K>0$, $\langle v_n \mid \rho_g v_n \rangle \leq 1/2$ when $|g|^{2-2\gamma} > K n$; \\ 
$\cdot$ or, for some $K,K'>0$ and $\eps >0$, $\langle v_n \mid \rho_g v_n \rangle \leq \mathrm{exp}(-K |g|^{2-2\gamma}/n)$ when $n > K'|g|^{2-2\gamma +\eps}$. 

Using the results of Bendikov,Pittet \& Sauer \cite{BPS}, note that $P^{(n)}(e) \succcurlyeq \mathrm{exp}(-n^\gamma)$ (near infinity) corresponds to the fact that the von Neumann dimension of $V_\lambda \succcurlyeq \mathrm{exp}(-\lambda^{\gamma/(1-\gamma)}$ (near zero).
\hfill $\Diamond$
\end{rem}

\begin{rem} \label{rmono}
There is an alternative proof of Lemma \ref{tdenomborn-l} along the lines of Erschler \& Karlsson \cite[Lemma 10]{EK}. Let $F(n) = -\ln P^{(2n)}(e)$. Then it is well-known that $F(n+1)-F(n)$ is decreasing, see Woess' book \cite[(10.1) Lemma]{Woe}. 
\hfill $\Diamond$
\end{rem}

\section{Some relations between the exponents}\label{sint}

The aim of this section is to relate the return, speed, entropy and growth exponents. An elementary computation (see Avez \cite[Theorem 3]{Avez74}) shows, using concavity of $\ln$, that
\stepcounter{teo}
\[\tag{\text{\theteo}} \label{etagamma}
H(P^{(n)}) 
\geq -\ln \Big(\sum_{g \in G} P^{(n)}(g)^2 \Big)  
= -\ln \|P^{(n)}\|_2^2 
= -\ln P^{(2n)}(e).
\]
Hence, $\ssl{\gamma} \leq \ssl{\eta}$ and $\srl{\gamma} \leq \srl{\eta}$. (With Kesten's criterion \cite{Kes2}, this shows Liouville$\implies$amenable.)

\eqref{eqcarvar}, gives $P^{(n)}(g) \leq Ne^{-M|g|^2/n}$. 
This, together with convexity of $x \mapsto x^2$, gives another useful bound, found in either Amir \& Vir\'ag \cite[Proposition 8]{AV} or Erschler \cite[Lemma 7.(i)]{Ers}:\stepcounter{teo}
\[ \tag{\text{\theteo}} \label{AVE}
H(P^{(n)}) 
\geq \ln N + M\sum_{g \in G} P^{(n)}(g) \tfrac{|g|^2}{n} 
\geq \ln N + \tfrac{M}{n} (\mathbb{E}|P^{(n)}|)^2. 
\]
Thanks to Erschler \& Karlsson \cite[Corollary 9.ii]{EK}, this inequality is also true for measures with finite second moment. This implies that $\ssl{\beta} \leq \frac{1+\ssl{\eta}}{2}$ and $\srl{\beta} \leq \frac{1+\srl{\eta}}{2}$ and constitutes a proof of (ii)$\implies$(iii) in the equivalences of the Liouville property described in \S{}\ref{ssexp}. 

There is also ``classical'' bound obtained by Varopoulos' method (see \eg Woess' book  \cite[(14.5) Corollary]{Woess}) relating growth and return exponent: $\ssl{\gamma} \geq \frac{\ssl{\nu}}{2+\ssl{\nu}}$. 

The following lemma (see \eg \cite[\S{}1.2]{BL}) will be useful.
\begin{lem}\label{tcohu-l}
Let $f:\nn \to \rr_{\geq 0}$ be a sub-additive, non-decreasing function with $f(0)=0$. If $g$ is the concave hull of $f$ then $f(x) \leq g(x) \leq 2 f(x)$.
\end{lem}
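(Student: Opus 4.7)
The bound $f(x) \leq g(x)$ is tautological: by definition $g$ is the smallest concave majorant of $f$. The real content is the upper bound $g(x) \leq 2 f(x)$. My plan is to use the variational description of the concave hull: by Carathéodory applied to the (one-dimensional) hypograph, for every $x \in \nn$,
\[
g(x) = \sup \Bigl\{ \lambda f(a) + (1-\lambda) f(b) : a, b \in \nn,\ \lambda \in [0,1],\ \lambda a + (1-\lambda) b = x\Bigr\}.
\]
It is thus enough to bound $\lambda f(a) + (1-\lambda) f(b)$ by $2 f(x)$ for an arbitrary such triple.

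Without loss of generality $a \leq x \leq b$, in which case $\lambda = (b-x)/(b-a)$ and $1-\lambda = (x-a)/(b-a)$. Monotonicity gives $f(a) \leq f(x)$, and sub-additivity applied to $b = x + (b-x)$ gives $f(b) \leq f(x) + f(b-x)$. Plugging these in,
\[
\lambda f(a) + (1-\lambda) f(b) \leq f(x) + \tfrac{x-a}{b-a}\, f(b-x),
\]
so it remains to show that the second term is bounded by $f(x)$.

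To control $f(b-x)$ I would iterate sub-additivity: for any $x \geq 1$ and $m \in \nn$ one has $f(mx) \leq m\, f(x)$, hence with $m = \lceil (b-x)/x \rceil$ and monotonicity,
\[
f(b-x) \leq m\, f(x) \leq \bigl(1 + (b-x)/x\bigr) f(x) = (b/x)\, f(x).
\]
It then suffices to verify the purely arithmetic inequality $\tfrac{(x-a) b}{(b-a) x} \leq 1$, which reduces to $ax \leq ab$ and holds because $a \geq 0$ and $x \leq b$. The trivial case $x = 0$ forces $a = b = 0$ and gives $g(0) = 0$.

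The main subtle point I anticipate is precisely this last step: if $b$ is much larger than $x$ then the single sub-additive splitting $f(b) \leq f(x) + f(b-x)$ is not enough, and one must iterate sub-additivity to get a linear bound on $f(b-x)$ in terms of $f(x)$. This is the only place where sub-additivity \emph{and} monotonicity are both used at once, and it is what prevents the constant from being better than $2$.
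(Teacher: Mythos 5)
Your argument is correct, and it checks out in every step: the two-point representation of the concave hull is the right one in one variable, the identities $\lambda=(b-x)/(b-a)$ and $1-\lambda=(x-a)/(b-a)$ are correct, the splitting $f(b)\le f(x)+f(b-x)$ plus the iterated bound $f(b-x)\le\lceil (b-x)/x\rceil f(x)\le (b/x)f(x)$ is exactly where sub-additivity and monotonicity enter, and the closing inequality $(x-a)b\le(b-a)x$ does reduce to $ax\le ab$. Note, however, that the paper gives no proof of this lemma at all; it is stated with a pointer to \cite[\S{}1.2]{BL}, so your write-up supplies an argument where the paper has only a citation. For comparison, the standard proof (essentially the one behind the reference) packages your key estimate more compactly: for fixed $x\ge 1$ one checks that the affine function $h(t)=(1+t/x)\,f(x)$ satisfies $h(t)\ge\lceil t/x\rceil f(x)\ge f(t)$ for all $t\in\nn$, so $h$ is a concave majorant of $f$, whence $g\le h$ everywhere and in particular $g(x)\le h(x)=2f(x)$. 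That version avoids both the two-point representation of the hull and the case analysis over $a,b$; it uses exactly the same input as yours (iterated sub-additivity plus monotonicity), so the two proofs are really the same estimate arranged differently, with the affine-majorant form being shorter and the constant $2$ equally visible in both.
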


The upcoming lemma is an improvement of a standard inequality (see \eg Erschler \cite[Lemma 6]{Ers}) and of the simple inequality $H(P^{(n)}) \leq \ln |B_n|$ (see Erschler \& Karlsson \cite[Lemma 1]{EK}). Since it might be of larger use, it will be stated in full generality, namely $P$ will be some measure and $S^*$ some finite (symmetric) generating set.
\begin{lem}\label{tvitinf-l}
Let $|g|_*$ be the word length for $S^*$. Assume $P$ has finite first moment (\ie $\sum_{g \in G} P(g) |g|_* < +\infty$ ), and $B_n = \{ g \in G \mid |g|_* \leq n\}$. Let $|B_n| = e^{f_V(n)}$ and assume $|B_n|$ is at least quadratic in $n$. Then
\[
H(P^{(n)}) \leq L + 4 f_V(\mathbb{E}|P^{(n)}|_*). 
\]
In particular, $\ssl{\beta} \srl{\nu} \geq \ssl{\eta}$, $\srl{\beta} \ssl{\nu} \geq \ssl{\eta}$  and $\srl{\beta} \srl{\nu} \geq \srl{\eta}$.
\end{lem}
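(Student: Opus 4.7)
The plan is to establish the main inequality via a dyadic annular decomposition of $G$ around the identity; the three exponent inequalities then follow by routine liminf/limsup arguments.

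Write $R := \mathbb{E}|P^{(n)}|_*$ and $\psi(x) := -x \ln x$. I would set $A_0 := B_R$ and, for $k \geq 1$, $A_k := B_{2^k R} \setminus B_{2^{k-1} R}$, partitioning $G = \bigsqcup_{k \geq 0} A_k$. Markov yields $P^{(n)}(A_k) \leq 2^{1-k}$ for $k \geq 1$, but the more important ingredient is the first-moment identity $R \geq \sum_{k \geq 1} 2^{k-1} R \cdot P^{(n)}(A_k)$, which gives the ``budget'' $\sum_{k \geq 0} 2^k P^{(n)}(A_k) \leq 3$.

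On each annulus, concavity of $\psi$ (the maximum entropy for a given total mass on a finite set is the uniform one) yields $\sum_{g \in A_k} \psi(P^{(n)}(g)) \leq P^{(n)}(A_k) \ln |A_k| + P^{(n)}(A_k) \ln \tfrac{1}{P^{(n)}(A_k)}$. Summing in $k$, the ``partition entropy'' $\sum_k P^{(n)}(A_k) \ln(1/P^{(n)}(A_k))$ is bounded by a universal constant $L_0$ because the geometric Markov decay $P^{(n)}(A_k) \leq 2^{1-k}$ makes the series converge. The ``volume'' piece is handled using sub-additivity of $f_V$ (inherited from $|B_{m+n}| \leq |B_m| |B_n|$), which gives $f_V(2^k R) \leq 2^k f_V(R)$; combined with the budget this yields
\[
\sum_{k \geq 0} P^{(n)}(A_k) f_V(2^k R) \;\leq\; f_V(R) \sum_{k \geq 0} 2^k P^{(n)}(A_k) \;\leq\; 3 f_V(R).
\]
Putting the two pieces together gives $H(P^{(n)}) \leq L_0 + 3 f_V(R) \leq L + 4 f_V(R)$.

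For the three exponent inequalities, set $R_n := \mathbb{E}|P^{(n)}|_*$; the main inequality gives $\ln H(P^{(n)}) \leq \ln f_V(R_n) + O(1)$ for large $n$. The inequalities $\ssl{\eta} \leq \ssl{\beta} \srl{\nu}$ and $\srl{\eta} \leq \srl{\beta} \srl{\nu}$ are immediate by inserting the eventual upper bound $f_V(k) \leq K k^{\srl{\nu}+\eps}$ (coming from the definition of $\srl{\nu}$ as a limsup) and taking liminf, resp.\ limsup, in $n$. The bound $\ssl{\eta} \leq \srl{\beta} \ssl{\nu}$ is more delicate: one invokes the infinite ``thin-point'' subsequence $k_m$ with $f_V(k_m) \leq k_m^{\ssl{\nu}+\eps}$ supplied by the definition of $\ssl{\nu}$, and uses that $|R_{n+1} - R_n|$ is bounded by $\mathbb{E}_P|g|_*$ so that $\lfloor R_n \rfloor$ meets every large integer; this furnishes an infinite set of $n$'s on which $f_V(R_n) \leq C R_n^{\ssl{\nu}+\eps}$, and the lower bound on $f_V(R_n)$ coming from $\ssl{\eta}$ converts to the claimed limsup lower bound on $R_n$.

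The main obstacle I anticipate is the accounting in the dyadic sum: the Markov bound $P^{(n)}(A_k) \leq 2^{1-k}$ is on its own too weak, since combined with $f_V(2^k R) \leq 2^k f_V(R)$ it would produce a sum divergent in the number of annuli. The moment identity $\sum_{k \geq 1} 2^{k-1} P^{(n)}(A_k) \leq 1$ is doing the real work, and it is precisely this exact cancellation between the $2^k$ in the sub-additive volume bound and the $2^k$ mass constraint from the first moment that delivers a clean multiple of $f_V(R)$ rather than a logarithmic blow-up.
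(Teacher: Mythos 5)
Your argument is correct, and for the main inequality it takes a genuinely different route from the paper. The paper compares $P^{(n)}$ to a single reference measure $m'$ that is uniform on each sphere $\delta B_k = B_k\setminus B_{k-1}$ and puts total mass $\phi(k)=L_1|B_k|^{-1}$ on that sphere; Gibbs' inequality gives $H(P^{(n)})\leq \sum_g P^{(n)}(g)\ln\big(1/m'(g)\big)\leq L'+2\sum_g P^{(n)}(g)f_V(|g|_*)$, and the expectation is then pulled inside $f_V$ by Jensen applied to the concave hull of $f_V$ (Lemma \ref{tcohu-l}). Your dyadic decomposition at scales $2^kR$ replaces the reference measure by the conditional-entropy formula on the partition $\{A_k\}$, and replaces the concave-hull/Jensen step by the cancellation you rightly identify as the crux: $f_V(2^kR)\leq 2^kf_V(R)$ from subadditivity against the first-moment budget $\sum_k 2^kP^{(n)}(A_k)\leq 3$. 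What each buys: the paper's proof is a two-line computation once Lemma \ref{tcohu-l} is in place, but it genuinely uses the hypothesis that $|B_n|$ is at least quadratic (to make $\sum_k|B_k|^{-1}$ converge so that $L_1$ exists), whereas your proof is self-contained and dispenses with that hypothesis altogether. Two small repairs: $R$ and $2^kR$ need not be integers, so pass to $\lceil\cdot\rceil$ (subadditivity shows this only costs an additive multiple of $f_V(1)$, absorbed into $L$); and in the proof of $\ssl{\eta}\leq\srl{\beta}\,\ssl{\nu}$ the increment bound $|R_{n+1}-R_n|\leq\mathbb{E}|P^{(1)}|_*$ only guarantees that $R_n$ comes within $\mathbb{E}|P^{(1)}|_*$ of every large integer, not that $\lfloor R_n\rfloor$ hits it --- again subadditivity and monotonicity of $f_V$ close this gap. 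The exponent bookkeeping at the end matches what the paper does (it simply states that $\srl{\eta}\leq\srl{\beta}\,\srl{\nu}$ is direct and the other two follow by applying the inequality along infinitely many $n$).
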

\begin{proof}
The idea is to compare a measure $m$ to a measure $m'$ which is uniform on spheres. First,
\[
H(m) - \sum_{g \in G} m(g) \ln(\frac{1}{m'(g)}) = \sum_{g \in G} m(g) \Big( -\ln \frac{m(g)}{m'(g)} \Big) \leq 0
\]
using $-\ln t \leq \tfrac{1}{t} -1$. Now let $a_i = |\delta B_i|$ where $\delta B_i = B_i \setminus B_{i-1}$ and $B_{-1} = \emptyset$ and $m'(g) = \phi(|g|_*)/ a_{|g|_*}$ where $\phi(k) = L_1 |B_k|^{-1}$ and $L_1$ chosen so that $\sum_{k \geq 0} \phi(k) = 1$. Then,
\[
H(m) \leq \sum_{g \in G} m(g) \big( \ln a_{|g|_*} - \ln \phi(|g|_*) \big).
\]
Then, one has (with $L'= \ln(L_1)$)
\[
H(m) \leq L' + 2 \sum_{g \in G} m(g) f_V(|g|_*) \leq L' + 4 f_V\Big( \sum_{g \in G} m(g) |g|_* \Big)
\]
by passing to the concave hull of $f_V$ and using Lemma \ref{tcohu-l} to bound this by $2f_V$. 
This shows $H(P^{(n)}) \leq L' + 4 f_V(\mathbb{E}|P^{(n)}|_*)$, as desired.

The bound $\srl{\eta} \leq \srl{\beta} \srl{\nu}$ follows directly while the others follow by applying the inequality for infinitely many $n$.
\end{proof}
If one assumes $|B_n| \leq Le^{Kn^\nu}$, one can also obtain the statement $H(P^{(n)}) \leq L' + K' (\mathbb{E}|P^{(n)}|_*)^\nu$ with $K'$ as close as desired to $K$, as in Erschler \& Karlsson \cite[Lemma 1]{EK}. 
\begin{cor}\label{tresum-c}
Assume $|B_n| \leq Le^{Kn^\nu}$ and $|B_n|$ is more than quadratic. For any measure of finite second moment (\ie $\sum_{g \in G} P(g) |g|^2 < +\infty$), one has
\begin{itemize}\setlength{\itemsep}{0ex} \renewcommand{\labelitemi}{$\cdot$}
 \item $\mathbb{E}|P^{(n)}| \leq K'n^{1/(2-\nu)}$, 
\item $H(P^{(n)}) \leq L'' + K'' n^{\nu /(2-\nu)}$, 
\item and $P^{(2n)}(e) \geq \mathrm{exp}\big(-H(P^{(n)})\big) \geq L'' \mathrm{exp}(- K'' n^{\nu /2-\nu})$. 
\end{itemize}
In particular,
\[
 \ssl{\beta} \leq \srl{\beta} \leq \tfrac{1}{2-\srl{\nu}}
\quad \text{ and } \quad 
 \srl{\gamma} \leq \srl{\eta} \leq \srl{\beta} \srl{\nu} \leq \frac{\srl{\nu}}{2-\srl{\nu}}.
\]
\end{cor}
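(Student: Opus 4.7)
\textbf{Proof plan for Corollary \ref{tresum-c}.}

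The plan is to combine two opposite inequalities on the entropy $H(P^{(n)})$: a lower bound in terms of the speed $\mathbb{E}|P^{(n)}|$ coming from Carne--Varopoulos, and an upper bound in terms of the speed coming from volume growth. Set $E_n = \mathbb{E}|P^{(n)}|$ and $H_n = H(P^{(n)})$. First, I would invoke inequality \eqref{AVE}, extended to measures of finite second moment via the Erschler--Karlsson version of Carne--Varopoulos \cite[Corollary 9.ii]{EK}, which yields $H_n \geq \ln N + \frac{M}{n} E_n^2$, i.e.
\[
E_n^2 \leq \tfrac{1}{M} n \bigl(H_n - \ln N\bigr).
\]
Second, I would invoke the strengthening of Lemma \ref{tvitinf-l} mentioned immediately before the corollary (obtained as in \cite[Lemma 1]{EK}, replacing the concave hull of $f_V$ by $Kn^\nu$ up to arbitrarily small adjustments of the constant $K$), giving
\[
H_n \leq L' + K' E_n^{\nu}.
\]

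The combination step is straightforward algebra. Substituting the second estimate into the first yields $E_n^2 \leq C_1 n + C_2 n E_n^{\nu}$ for constants $C_1, C_2$ depending on $K, L, M, N$. Since $\nu \leq 1$, whenever $E_n \geq 1$ we obtain $E_n^{2-\nu} \leq C_1 n E_n^{-\nu} + C_2 n \leq (C_1 + C_2) n$, and therefore $E_n \leq K'' n^{1/(2-\nu)}$ (absorbing the small-$E_n$ case into the constant). Substituting this back into the upper bound on $H_n$ gives $H_n \leq L'' + K'' n^{\nu/(2-\nu)}$, which is the second assertion. The return probability estimate is then immediate from \eqref{etagamma}: since $P^{(2n)}(e) = \|P^{(n)}\|_2^2 \geq \exp(-H_n)$ by concavity of the logarithm, we get $P^{(2n)}(e) \geq L'' \exp(-K'' n^{\nu/(2-\nu)})$.

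Finally, the exponent chain is obtained by reading off each inequality at the level of $\srl{\phi}$-exponents. The bound $\srl{\beta} \leq 1/(2-\srl{\nu})$ follows directly from $E_n \leq K'' n^{1/(2-\nu)}$ applied with $\nu$ replaced by any $\nu' > \srl{\nu}$; Lemma \ref{tvitinf-l} provides $\srl{\eta} \leq \srl{\beta} \srl{\nu}$; \eqref{etagamma} provides $\srl{\gamma} \leq \srl{\eta}$; and composing these gives $\srl{\eta} \leq \srl{\beta} \srl{\nu} \leq \srl{\nu}/(2-\srl{\nu})$.

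The main subtlety, which is the only place where care is required, is the extraction of $E_n \leq K'' n^{1/(2-\nu)}$ from the implicit inequality $E_n^2 \leq C_1 n + C_2 n E_n^{\nu}$: one must argue by cases on which term on the right dominates (or equivalently, perform a one-step bootstrap) to conclude that neither regime can produce a speed larger than $n^{1/(2-\nu)}$. Everything else is a direct substitution or a transcription from the two input inequalities.
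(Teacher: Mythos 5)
Your proposal is correct and follows essentially the same route as the paper: combine the lower bound \eqref{AVE} on $H(P^{(n)})$ in terms of speed with the upper bound from Lemma \ref{tvitinf-l} to get the implicit inequality $(\mathbb{E}|P^{(n)}|)^2 \leq C_1 n + C_2 n (\mathbb{E}|P^{(n)}|)^{\nu}$, solve for the speed, substitute back for the entropy bound, and finish with \eqref{etagamma} for the return probability and the exponent chain. Your explicit bootstrap for extracting $\mathbb{E}|P^{(n)}| \leq K'' n^{1/(2-\nu)}$ is a slightly more careful rendering of a step the paper compresses into a choice of constant, but the argument is the same.
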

\begin{proof}
Using first \eqref{AVE} (which extends to measures of finite second moment by Erschler \& Karlsson \cite[Corollary 9.ii]{EK}) then Lemma \ref{tvitinf-l}, one has 
$( \mathbb{E}|P^{(n)}| )^2 \leq n \big( \tilde{L} + 4(\ln K) (\mathbb{E}|P^{(n)}|)^\nu \big)$. Putting $K' = \big( 4 \ln K+ \tilde{L}/\mathbb{E}|P^{(1)}|^\nu \big)^{1/(2-\nu)}$, this implies the first claim. The second claim is obtained by concatenating 
Lemma \ref{tvitinf-l} and the bound on speed just obtained. The relation \eqref{etagamma} is also used in the sequence of inequalities in term of exponents.
\end{proof}
Lemma \ref{tvitinf-l} and Corollary \ref{tresum-c} 
finish the proof of Theorem \ref{tint-t}. 

Let us mention an additional inequality. This inequality is already present in Coulhon \& Grigoryan \cite[\S{}6]{CG} in a sharper form but with extra hypothesis. The proof presented here is elementary if one knows \eqref{eqdung} and could be improved in the case of polynomial growth (though it does not meet \cite{CG}).
\begin{lem}\label{tgamnupasu-l}
Assume $|B_n|=e^{f_V(n)}$ is at least cubic. Let $f$ be the concave hull of $f_V$, and $F$ the inverse function of [the strictly increasing function] $k \mapsto k^2/f(k)$. Then $P^{(n)}(e) \geq K'' |B_{F(L'' n)}|^{-2}  F(L'' n)^{-1}$ for some $K'',L''>0$.
\end{lem}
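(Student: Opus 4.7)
My plan is to apply the Dungey-type off-diagonal estimate \eqref{eqdung} (which holds in every group) with $\eps = 1/2$:
\[
P^{(n)}(g) \leq \sqrt{N\, P^{(n)}(e)}\, e^{-M|g|^2/(2n)}.
\]
Summing over $g \in G$ and using $\sum_g P^{(n)}(g) = 1$ immediately yields $P^{(n)}(e) \geq N^{-1} S_n^{-2}$ where $S_n := \sum_g e^{-M|g|^2/(2n)}$. This is the source of the exponent $-2$ on the volume in the target bound; it remains to estimate $S_n$ from above in terms of $|B_{F(L''n)}|$ for a well-chosen $L''$.

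For that, I would rewrite $S_n = \sum_{j \geq 0} |S_j|\, e^{-Mj^2/(2n)}$ (with $|S_j|$ the size of the sphere of radius $j$), use $|S_j| \leq |B_j| \leq e^{f(j)}$ to pass to the concave hull, and split the sum at $k := F(L''n)$. The part $j \leq k$ contributes at most $|B_k|$. For $j > k$, concavity of $f$ together with $f(0) = 0$ gives the linear majorant $f(j) \leq (j/k)\, f(k) = jk/(L''n)$, the last equality being the defining property of $F$. Taking $L''$ sufficiently large (e.g.\ $L'' = 4/M$) then yields $f(j) - Mj^2/(2n) \leq -Mjk/(4n)$ for $j \geq k$, so the tail becomes geometric with ratio $e^{-Mk/(4n)}$. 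The cubic growth hypothesis gives $f(k) \geq 3 \log k + O(1)$, hence $k \gtrsim \sqrt{n \log n}$, which makes the geometric tail $O(1)$. Consequently $S_n \leq |B_k| + O(1) \leq 2|B_k|$ for $n$ large enough, and therefore
\[
P^{(n)}(e) \geq \frac{c}{|B_k|^{2}} \geq \frac{c}{|B_k|^{2}\, k}.
\]
Small-$n$ discrepancies are absorbed into the constants $K''$ and $L''$; in fact the factor $k^{-1}$ in the stated conclusion appears to be slack.

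The principal obstacle is the tail estimate on $\sum_{j > k} e^{f(j) - Mj^2/(2n)}$: matching the Gaussian decay against the potentially wildly varying volume term $e^{f(j)}$ requires the linear majorant coming from concavity of the hull, and the cubic growth hypothesis is essential to force $k = F(L''n)$ to be large enough (relative to $n$) for the quadratic term to dominate. Without at least cubic growth one would have to carry nontrivial contributions from the tail and would lose the clean form of the conclusion.
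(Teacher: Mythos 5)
Your proof is correct and rests on the same engine as the paper's: sum Dungey's interpolated bound \eqref{eqdung} over the group, split the resulting series at the scale $k$ where the volume term $e^{f(k)}$ balances the Gaussian factor (i.e.\ $k^2/f(k)\asymp n$), and use the concave hull to control the volume. The execution differs in two genuine ways. First, the exponent $-2$ on the ball: you get it by fixing $\eps=1/2$, so that $1\le\sqrt{N\,P^{(n)}(e)}\,S_n$ squares the volume and your final bound is in terms of the true ball $|B_k|$; the paper instead keeps $\eps$ small and obtains the $2$ from the comparison $f\le 2f_V$ of Lemma~\ref{tcohu-l}. Second, and more substantially, the tail $\sum_{j>k}$: the paper kills it crudely by noting it has at most $n$ terms, each at most $N^{\eps}P^{(n)}(e)^{1-\eps}$, and that $nP^{(n)}(e)^{1-\eps}\to0$ (which is where its cubic hypothesis enters and which forces $\eps<1/3$); you instead use the linear majorant $f(j)\le jf(k)/k$ from concavity to turn the tail into a geometric series. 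Your version is sharper --- it gives $S_n\le 2|B_k|$ and so dispenses with the factor $F(L''n)^{-1}$, as you note --- and it sidesteps the paper's somewhat delicate passage from a bound on $P^{(n)}(e)^{1-\eps}$ back to one on $P^{(n)}(e)$. One point worth tightening: the claim that $k\gtrsim\sqrt{n\log n}$ ``makes the geometric tail $O(1)$'' is not self-contained, since as written the conclusion seems to depend on the size of the Carne--Varopoulos constant $M$. The clean finish is to observe that with $L''=4/M$ the defining relation $f(k)=k^2/(L''n)$ gives $Mk^2/(4n)=f(k)$ exactly, so the leading term of the tail is $e^{-f(k)}\le|B_k|^{-1}\le K^{-1}k^{-3}$ while the effective length of the geometric series is $\lesssim n/k$; since $k^2\ge L''nf(1)$, the tail is $\lesssim nk^{-4}\to0$. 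That is where the cubic hypothesis actually does its work in your argument, and with it everything closes up.
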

\begin{proof}
Write $|B_n| = e^{f_V(n)}$ as before.Then, using the bound \eqref{eqdung} one has, for any $\eps \in ]0,1[$, 
\[
1 = \sum_{g \in G} P^{(n)}(g) \leq \sum_{k=0}^n |B_k| P^{(n)}(e)^{1-\eps} N^\eps e^{-M \eps k^2/n} \leq P^{(n)}(e)^{1-\eps} \sum_{k=0}^n N^\eps e^{f(k) - M \eps k^2/n},
\]
where $f$ is the concave hull of $f_V$.
Let $n_0 = \inf\{ k \mid k^2 /f(k) \geq n /M\eps \}$. Note that $k \mapsto k^2/f(k)$ is strictly increasing. Indeed, since $f$ is concave and $f(0)=0$ one has $f(n) = \sum_{i=1}^n f(i)-f(i-1) \geq n \big( f(n) -f(n-1) \big)$. That $(k+1)^2/f(k+1) > k^2/f(k)$ then follows from:
\[
k^2 \big( f(k+1) - f(k) \big) \leq \tfrac{k^2  f(k+1) }{k+1} < k \big( f(k) + f(1) \big)  \leq 2 k f(k) < (2k+1) f(k).
\]
Hence, the exponent of the exponential is negative if $k \geq n_0$.
Since $P^{(n)}(e)^{1-\eps} n \to 0$ for some $\eps \in ]0,1[$ (because $|B_n| \geq Kn^3$ implies $P^{(n)}(e) \leq K'n^{3/2}$), one may write (with $\delta_n \to 0$ as $n \to \infty$)
\[
1-\delta_n \leq P^{(n)}(e)^{1-\eps} \sum_{k=0}^{n_0} N^\eps e^{f(k) - M \eps k^2/n} \leq P^{(n)}(e)^{1-\eps} \sum_{k=0}^{n_0} N^\eps e^{f(k)} \leq n_0 P^{(n)}(e)^{1-\eps} e^{f(n_0)}.
\]
This implies that $P^{(n)}(e) \geq K'' e^{- f(n_0)} n_0^{-1}$. To conclude apply Lemma \ref{tcohu-l}: $f(x) \leq 2f_V(x)$.
\end{proof}
The preceding lemma implies $\srl{\gamma} \leq \srl{\nu} / (2-\srl{\nu})$ and $\ssl{\gamma} \leq \ssl{\nu} / (2-\srl{\nu})$, but these inequalities already follows for a larger class of measures from \eqref{etagamma} and Corollary \ref{tresum-c}. One cannot deduce $\ssl{\gamma} \leq \ssl{\nu} / (2-\ssl{\nu})$ from Lemma \ref{tgamnupasu-l}.

Lastly, the estimate $\srl{\gamma} \geq \frac{\srl{\nu}}{2+\srl{\nu}}$ can be deduced from Coulhon, Grigoryan \& Pittet \cite[Corollary 7.2]{CGP}.
The estimates cited or proved in this paper can also be summed up by:
\[
\beta \overset{ii}{\leq} \frac{1+\eta}{2}
\hspace{1ex} , \hspace{1ex}
\frac{\ssl{\nu}}{2+\ssl{\nu}} \leq \ssl{\gamma} 
\overset{i}{\leq} \ssl{\eta} 
\overset{i}{\leq} \min(\ssl{\beta} \srl{\nu} , \srl{\beta} \ssl{\nu} )
\overset{ii}{\leq} \frac{\ssl{\nu}}{2-\srl{\nu}}
\hspace{1ex} \text{and} \hspace{1ex} 
\frac{\srl{\nu}}{2+\srl{\nu}}
\leq \srl{\gamma} \overset{i}{\leq} \srl{\eta} \overset{i}{\leq} \srl{\beta} \srl{\nu} \overset{ii}{\leq} \frac{\srl{\nu}}{2-\srl{\nu}}
\]
where $i$ (resp. $ii$) denotes inequality which hold for measures with finite first (resp. second) moment, the remaining inequalities hold only for finitely supported measures and the absence of bars [above or below] the exponent mean it holds if bars are put on both sides at the same place.

The lower bound $\ssl{\beta} \geq \ssl{\nu}/ \srl{\nu}(2+\ssl{\nu})$ is not optimal (B.~Vir\'ag gave a [sharp] lower bound of $\tfrac{1}{2}$; see Lee \& Peres \cite{LP}).

Other inequalities which could be interesting to explore are: 
$\ssl{\eta} \leq \frac{\ssl{\nu}}{2-\ssl{\nu}}$? 
$\ssl{\gamma} \leq \frac{\ssl{\nu}}{2-\ssl{\nu}}$? 
A more interesting one (since a positive answer combined with \eqref{AVE} would give a proof of Conjecture \ref{laconj}) is
\begin{ques}\label{laques}
Does the inequality 
$\ssl{\eta} \leq \frac{\srl{\gamma}}{1-\srl{\gamma}}$ 
hold? Could it even hold for all measures with finite second moment?
\end{ques}
This has been answered in the positive by Saloff-Coste \& Zheng \cite[Theorem 1.8]{SCZ}. 

Let us conclude with this possibly well-known lemma.
\begin{lem}\label{tvitsurj-l}
Assume $\psi:G \twoheadrightarrow H$ is a surjective homomorphism. Let $S = \mathrm{Supp} P$ be generating for $G$ (hence $\psi(S)$ generates $H$). Let $P' = \psi^*P$, \ie $P'(A) =  P\big( \psi^{-1}A \big)$.
Then 
$\mathbb{E}|P^{(n)}_{e_G}| \geq \mathbb{E}|P'^n_{e_H}|$ (where the word lengths $|\cdot|$ are for $S$ and $\psi(S)$ respectively).
\end{lem}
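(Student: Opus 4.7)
The plan is to exploit two elementary facts: first, a surjective homomorphism $\psi$ is $1$-Lipschitz between the Cayley graphs (with the generating sets $S$ and $\psi(S)$), and second, pushing forward commutes with convolution, so $\psi_*(P^{(n)}) = (P')^{(n)}$.

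First I would verify that $|\psi(g)|_{\psi(S)} \leq |g|_S$ for every $g \in G$. Writing $g = s_1 \cdots s_k$ as a geodesic word in $S$ of length $k = |g|_S$, one has $\psi(g) = \psi(s_1)\cdots \psi(s_k)$, which is a word of length $k$ in $\psi(S)$; hence $|\psi(g)|_{\psi(S)} \leq k$. Second I would check the identity $\psi_*(P * Q) = (\psi_*P) * (\psi_*Q)$ for any two probability measures on $G$; this is a direct calculation using that $\psi$ is a homomorphism, and applied inductively it yields $\psi_*(P^{(n)}) = (P')^{(n)}$, since by definition $P' = \psi_*P$.

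Combining the two,
\[
\mathbb{E}|P^{(n)}_{e_G}| = \sum_{g \in G} P^{(n)}(g)\,|g|_S \geq \sum_{g \in G} P^{(n)}(g)\,|\psi(g)|_{\psi(S)} = \sum_{h \in H} (P')^{(n)}(h)\,|h|_{\psi(S)} = \mathbb{E}|(P')^n_{e_H}|,
\]
where the middle equality is just reorganising the sum according to the fibres of $\psi$ and using $\psi_*P^{(n)} = (P')^{(n)}$.

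There is no real obstacle here; the only point that requires a moment's care is the compatibility of pushforward and convolution, which ultimately rests on $\psi$ being a homomorphism and nothing more. The statement of the lemma does not require $\psi(S)$ to be symmetric nor $P$ to have any moment assumption, since the inequality is purely pointwise.
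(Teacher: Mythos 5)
Your proof is correct and is essentially the paper's argument in different clothing: the paper phrases it via the coupled walkers $W^H_n=\psi(W^G_n)$ (so that the law of $W^H_n$ is the pushforward of $P^{(n)}$, which is your convolution identity) together with the $1$-Lipschitz bound $d_H(\psi(\gamma),e_H)\leq d_G(\gamma,e_G)$, which is exactly your $|\psi(g)|_{\psi(S)}\leq |g|_S$. No gap; nothing further to add.
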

\begin{proof}
Let $d_H$ be the distance of the Cayley graph with respect to $S_H =$ support of $P'$. Define the function $d':G \to \nn$ by $d'(\gamma) = d_H \big( \psi(\gamma),e_H \big)$. Note that $d'(\gamma) \leq d_G(\gamma,e)$: indeed $d_H(h_1,h_2) = d_G\big( \psi^{-1}(h_1), \psi^{-1}(h_2) \big)$, so that $d'(\gamma) = d_G(\gamma N, N)$ where $N = \ker \psi$. Let $W^G_n$ be the random walker on $G$ and $W^H_n$ be the random walker on $H$ (which moves according to $P'$ as in the statement). Note that $\mathbb{P} \big( d_H(W^H_n,e_H) = i \big) = \mathbb{P} \big( d'(W^G_n) = i\big)$. This implies
\[
\mathbb{E}|P'^n| = \mathbb{E} \big( d_H(W_n^H,e_H) \big) = \mathbb{E} \big( d'(W_n^G) ) \leq \mathbb{E} \big( d_G(W_n^G, e_G) \big) = \mathbb{E}|P^{(n)}| \qedhere
\]
\end{proof}
In particular, this proves that $\mathbb{E}|P^{(n)}| \geq K_P n^{1/2}$ for any $G$ with a non-trivial homomorphism to $\zz$ (this is true for any group, due to Vir\'ag, see \cite{LP}).

The statement of Lemma \ref{tvitsurj-l} may be generalised to coverings of graphs and more general maps. 
Here is a classical example. Define ``levels'' in the $k$-regular tree by looking at points which are at the same distance to some [fixed] point at infinity. The ``level maps'' gives a morphism from the tree to the line $\zz$. The arguments of the above Lemma apply to this map, but with a biased random walk on $\zz$. This gives a rather precise estimate of the speed.  


\section{Some known values}\label{stab}

Below is a table containing cases where $\alpha, \ssl{\beta}$ and $\srl{\gamma}$ are known. The convention for wreath products $L \wr H$ is that $L$ is the ``lamp state'' group, \eg $\zz_2 \wr \zz$ is the usual lamplighter on the line.
One could complete the table for many other wreath products using Naor \& Peres \cite[Theorem 6.1]{NPspeed}, Naor \& Peres \cite[Theorem 3.1]{NPlamp}, Pittet \& Saloff-Coste \cite[Theorem 3.11 and Remark (ii) after Theorem 3.15]{PSCexp} and Revelle \cite[Theorem 1]{Revelle}.

The lower bound of Theorem \ref{tconj-t} [assuming \eqref{eqodd} holds] meets compression in (A), (C), (D) if $d=2$, (E) if $H$ has polynomial growth, (H) and (I). It also meets speed, except in the last two cases. The lower bound meets neither speed nor compression in (B), (E) if $H$ is polycyclic [since $\srl{\gamma} = \tfrac{3}{5}$] and (F) if $k \geq 3$. All the groups mentioned that have $\srl{\gamma} = \tfrac{1}{2}$ are Liouville.

%

\newcommand{\fn}[1]{\footnotemark[#1]}
\renewcommand{\thefootnote}{(\alph{footnote})}
\newcounter{toto}
\newcommand{\ft}[1]{\setcounter{toto}{#1}(\alph{toto})}
\newcounter{trala}
\setcounter{trala}{0}
\newcommand{\tra}[0]{\stepcounter{trala}\Alph{trala}:~ }
\begin{table}[H]
\begin{tabular}{@{}r@{}l@{}|cc@{}|@{}ccc@{}}
& Group 		& $\ssl{\beta}$ 	& $\srl{\gamma}$	& $1-\srl{\gamma}$ 	& $\alpha$		& $1/ 2 \ssl{\beta}$ \\[.3ex]
\hhline{==|==|===}
&			&			&			&			&			&	\\[-2.3ex]
\tra&Polynomial growth	& $\tfrac{1}{2}$\fn{1}	& $0$\fn{7}		& $1$			& $1$\fn{2}		&$1$ \\[.2ex]
\hline
\tra&Polycyclic of	&\multirow{3}{*}{$\tfrac{1}{2}$\fn{2}}
						&\multirow{3}{*}{$\tfrac{1}{3}$\fn{8}}
									&\multirow{3}{*}{$\tfrac{2}{3}$}
												&\multirow{3}{*}{$1$\fn{2}}&\multirow{3}{*}{$1$} \\
&exponential growth	&			&			&			&			&	\\
&or $F \wr \zz$ with $F$ finite&		&			&			&			&	\\
\hline
&			&			&			&			&			&	\\[-2.3ex]
\tra&$\zz \wr \zz$	&$\tfrac{3}{4}$\fn{3} 	& $\tfrac{1}{3}$\fn{9}	& $\tfrac{2}{3}$	& $\tfrac{2}{3}$\fn{11}	& $\tfrac{2}{3}$ \\[.5ex]
\hline
\tra&$F \wr H$ with $F$ finite or 
			&\multirow{3}{*}{$1$\fn{4}}& \multirow{3}{*}{$\tfrac{d}{d+2}$\fn{9}}
									&\multirow{3}{*}{$\tfrac{2}{d+2}$}
												& \multirow{3}{*}{$\tfrac{1}{2}$\fn{12}}
															& \multirow{3}{*}{$\tfrac{1}{2}$} \\
&$\zz$ and $H$ polynomial &			&			&			&			&	\\
&growth of degree $d \geq 2$&			&			&			&			&	\\
\hline
&			&			&			&			&			&	\\[-2.3ex]
\tra&$H \wr \zz^2$ with $H$ amenable&\multirow{2}{*}{$1$\fn{4}}
						&\multirow{2}{*}{$\geq \tfrac{1}{2}$\fn{9}}
									&\multirow{2}{*}{$\leq\tfrac{1}{2}$}
												&\multirow{2}{*}{$\tfrac{1}{2}$\fn{13}}
															&\multirow{2}{*}{$\tfrac{1}{2}$}\\
&and $\alpha(H) \geq \tfrac{1}{2}$&		&			&			&			&\\[.5ex]
\hline
\tra&$( \ldots ((\zz \wr \zz) \wr \zz)\ldots) \wr \zz$
			&\multirow{3}{*}{$1-\tfrac{1}{2^{k}}$\fn{3}}
						& \multirow{3}{*}{$\tfrac{k-1}{k+1}$\fn{9}}
									&\multirow{3}{*}{$\tfrac{2}{k+1}$}
												& \multirow{3}{*}{$\tfrac{1}{2-2^{1-k}}$\fn{11}}
															& \multirow{3}{*}{$\tfrac{1}{2-2^{1-k}}$} \\ 
&iterated wreath product&			&			&			&			&\\
&with $k$ ``$\zz$'', $k \geq 1$&		&			&			&			&\\
\hline
\tra&Intermediate growth&\multirow{2}{*}{$[\tfrac{1}{2}, \tfrac{1}{2-\nu}]$\fn{5}}
						&\multirow{2}{*}{$[\tfrac{v}{2+v}, \tfrac{\nu}{2-\nu}]$\fn{10}}
									&\multirow{2}{*}{$[\tfrac{1-\nu}{1-\nu/2}, \tfrac{1}{1+v/2}]$}
												&\multirow{2}{*}{?}	&\multirow{2}{*}{$[1-\tfrac{\nu}{2}, 1]$} \\
&$e^{n^v} \preceq |S^n| \preceq e^{n^\nu}$  &	&			&			&			&\\
\hline
\tra&``Incompressible''	&\multirow{2}{*}{?}&\multirow{2}{*}{$1$}&\multirow{2}{*}{$0$}&\multirow{2}{*}{$0$\fn{14}}&\multirow{2}{*}{$\geq \tfrac{1}{2}$}\\
&amenable groups	&			&			&			&			&\\
\hline
			&			&			&			&			&\\[-2.3ex]
\tra&Property $(T)$ groups& $1$\fn{6}		& $1$\fn{6}		& $0$			& $0$\fn{6}		& $\tfrac{1}{2}$ \\[.2ex]
\end{tabular}
\end{table}


\begin{table}[H]
{\footnotesize
\begin{tabular}{r@{$\,$}l}
      & {\bf Table's references}\\
\ft{1}& \multirow{2}{*}{\parbox{.85\textwidth}{The upper bound is classical; see \S{}\ref{sint}. The (general) matching lower bound is due to 
	Vir\'ag (see Lee \& Peres \cite{LP}); this particular instance could be obtained by arguments of \S{}4.} }\\
      & \\
\ft{2}& \multirow{2}{*}{\parbox{.85\textwidth}{The value of compression (from Tessera \cite[Theorems 9 and 10]{Tessera}) imply the value of speed. For finer estimates on 
	speed see Thompson \cite[Theorem 1]{Thomp}.}} \\
      & \\
\ft{3}& This may be found either in Erschler \cite[Theorem 1]{Ers} or Revelle \cite[Theorem 1]{Revelle}.\\
\ft{4}& See Erschler \cite[Theorem 1]{Ers} or Naor \& Peres \cite[Theorem 6.1]{NPspeed}.\\
\ft{5}& \multirow{2}{*}{\parbox{.85\textwidth}{The upper bound is easy; see \S\ref{sint}. The lower bound is the general one due to Vir\'ag, see 
	the introduction of Lee \& Peres \cite{LP}.}}\\
      & \\
\ft{6}& \multirow{3}{*}{\parbox{.85\textwidth}{Kesten's criterion for amenability \cite{Kes2} shows $\gamma=1$, use Kesten \cite[Theorem 5]{Kes1} or Lemma \ref{tvitinf-l} 
	to get $\beta =1$. Property (T) groups do not have the Haagerup property. In particular, they have no proper affine action on a Hilbert space; hence $\alpha =0$.}}\\
      & \\
      & \\
\ft{7}& \multirow{2}{*}{\parbox{.85\textwidth}{$0$ should be interpreted as arbitrarily small. This is the classical estimate of Varopoulos, see Woess' book 
	\cite[(14.5) Corollary]{Woess}.}} \\
      & \\
\ft{8}& Due to Varopoulos; see \cite[\S{}1.1]{PSCexp} for a list of possible references.\\
\ft{9}& See Pittet \& Saloff-Coste \cite[Theorems 3.11 and 3.15]{PSCexp}\\
\ft{10}& \multirow{2}{*}{\parbox{.85\textwidth}{For the lower bound see Woess' book \cite[(14.5) Corollary]{Woess}. The upper bound is 
	Coulhon, Grigor'yan \& Pittet \cite[Corollary 7.4]{CGP}; see also \S{}\ref{sint} of the present text.}}\\
      & \\
\ft{11}& See Naor \& Peres \cite[Corollary 1.3]{NPspeed}.\\
\ft{12}& See Naor \& Peres \cite[Theorem 3.1]{NPlamp}.\\
\ft{13}& See Naor \& Peres \cite[Remark 3.4]{NPspeed}.\\
\ft{14}& See Austin \cite{Austin} or Bartholdi \& Erschler \cite[\S{}1.2 and \S{}7]{BE}. $\alpha=0$ implies $\gamma=1$.
\end{tabular}
}
\end{table}

\vfill

Except in (H) and (I), the upper bound $\alpha \leq 1/2\ssl{\beta}$ of Naor \& Peres \cite{NPspeed} meets compression. ``Incompressible'' (\ie of compression exponent $0$) amenable groups were first constructed by Austin (a solvable group, see \cite{Austin}) and, more recently, Bartholdi \& Erschler \cite[\S{}1.2 and \S{}7]{BE}. It seems reasonable to believe there is an amenable group where the compression meets neither the upper bound of \cite{NPspeed} nor the lower bound of Theorem \ref{tconj-t} [assuming \eqref{eqodd} holds].

\end{document}